\newtheorem{theorem}{Theorem}[section]
\newtheorem{lemma}[theorem]{Lemma}
\newtheorem{corollary}[theorem]{Corollary}
\theoremstyle{definition}
\theoremstyle{remark}
\newtheorem{remark}[theorem]{Remark}
\numberwithin{equation}{section}
\begin{document}
\setcounter{page}{1}


\title[On refined Young inequality and its reverse ]{On refined operator version
of Young inequality and its reverse}
\author[A. Sheikhhosseini and M. Khosravi]{Alemeh
Sheikhhosseini$^1$  Maryam Khosravi$^{2*}$}

\address{$^{1}$ Department of
 Pure Mathematics, Faculty of Mathematics and Computer,
 Shahid Bahonar University of Kerman,
Kerman, Iran} \email{sheikhhosseini@uk.ac.ir;
hosseini8560@gmail.com}

\address{$^{2}$ Department of
 Pure Mathematics, Faculty of Mathematics and Computer,
 Shahid Bahonar University of Kerman,
Kerman, Iran} \email{khosravi$_-$m @uk.ac.ir;
khosravi$_-$m2000@yahoo.com}

\subjclass[2010]{Primary: 47A63; Secondary: 47A64, 15A42.}
\keywords{Young inequality, positive operators, weighted means,
Hilbert-Schmidt norm\\$*$ Corresponding author. }

\begin{abstract}
In this note, some refinements of Young inequality and its reverse
for positive numbers are proved and using these inequalities some
operator versions and Hilbert-Schmidt norm versions for matrices
of these inequalities are obtained.
\end{abstract}
\maketitle
\section{Introduction}
Let $\mathbb{B}(\mathcal{H})$ denote the $ C^{*}$-algebra of all
bounded linear operators on a complex Hilbert space $ \mathcal{H}.
$ In the case when dim $ \mathcal{H}=n,  $ we identify
$\mathbb{B}(\mathcal{H})$ with the matrix algebra  $
\mathbb{M}_{n} $ of all $ n \times n $ complex matrices.
  For  $ A=(a_{ij})  \in \mathbb{M}_{n}, $ the Hilbert-Schmidt norm of $ A $ is
  defined by $$ \|A\|_{2} = \left( \sum_{i, j=1}^{n}  |a_{ij}
  |^{2}\right)^{\frac{1}{2}}=\sum_{i=1}^n \left(s^2_j(A)\right)^{\frac{1}{2}},$$
  where $s_j(A)\ (1\leq j\leq n)$ are the singular values of $A$.
    It is  known that $ \|.\|_{_{2}} $  is a  unitarily invariant norm.

  Let $ A, B \in \mathbb{M}_{n}. $  Denoted by $ A \circ B $ the Schur (Hadamard)
  product of $ A $ and $ B, $ that is, the entrywise product.

  For positive real numbers $ a $ and $ b $, the classical Young inequality says
  that if $ \nu \in [0, 1], $ then
  $$ a^{1-\nu}b^{\nu} \leqslant (1- \nu) a +  \nu b, $$
  with equality if and only if $ a=b. $ When $ \nu=\frac{1}{2}, $ the Young
  inequality is called the arithmetic-geometric mean inequality
  \begin{equation}
  \sqrt{ab} \leqslant \dfrac{a+b}{2}.
  \end{equation}
  Throughout, we denote $ a^{1-\nu}b^{\nu} $ and $ (1-\nu ) a + \nu b, $ respectively
   by $ a\sharp_{\nu}  b$ and  $ a \nabla_{\nu} b. $
   The Heinz mean is defined as
   $$  H_{\nu}(a, b)= \frac{a^{1-\nu}b^{\nu} + a^{\nu}b^{1-\nu} }{2}$$
   for $ a, b \geqslant 0 $ and $ \nu \in [0, 1]. $  It's easy to see that
   $$ \sqrt{ab} \leqslant  H_{\nu}(a, b)  \leqslant  \dfrac{a+b}{2}.$$
  In \cite{k1} and \cite{k2}, F. Kittaneh and Y. Manasrah  improved   the Young
  inequality and its reverse as follows:
  \small{
  \begin{equation}\label{re1}
  a^{1-\nu}b^{\nu} + r(\sqrt{a}-\sqrt{b})^{2} \leqslant (1-\nu) a + \nu b \leqslant
  a^{1-\nu}b^{\nu} + s(\sqrt{a}-\sqrt{b})^{2},
  \end{equation}}
  where $ r=\min \{\nu, 1-\nu\} $  and $ s=\max \{\nu, 1-\nu \}. $

  The authors of  \cite{he}  and \cite{oh}  obtained another  refinement of the  Young inequality as follows:
   \small{
    \begin{equation}\label{re2}
   r^{2}(a-b)^{2}    \leqslant    (  (1-\nu) a + \nu b )^{2}-(a^{1-\nu}b^{\nu})^{2}
   \leqslant s^{2}(a-b)^{2},
    \end{equation}}
  where  $ r=\min  \{\nu, 1-\nu   \}$, and $ s=\max \{\nu, 1-\nu \}. $

  Recently, J. Zhao and J. Wu \cite{jz} obtained the following  refinement of  inequality
  (\ref{re1}): \small{
  \begin{align*}
  r ((ab)^{\frac{1}{4}}-\sqrt{a})^{2} + \nu (\sqrt{a}-\sqrt{b})^{2}
  &\leqslant (1-\nu) a+ \nu b-a^{1-\nu}b^{\nu }\\
&  \leqslant (1- \nu) (\sqrt{a}-\sqrt{b})^{2} -r
((ab)^{\frac{1}{4}}-\sqrt{b})^{2},
  \end{align*}}
  where $ 0 \leqslant \nu \leqslant \frac{1}{2} $ and  $ r=\min \{ 2\nu, 1- 2\nu \} $ and
  \small{
  \begin{align*}
  r ((ab)^{\frac{1}{4}}-\sqrt{b})^{2} +(1- \nu) (\sqrt{a}-\sqrt{b})^{2}
 &\leqslant (1-\nu) a+ \nu b- a^{1-\nu}b^{\nu }\\
 & \leqslant  \nu
(\sqrt{a}-\sqrt{b})^{2} -r ((ab)^{\frac{1}{4}}-\sqrt{a})^{2},
  \end{align*}}
  where $ \frac{1}{2} \leqslant \nu \leqslant 1 $ and  $ r=\min \{2(1-\nu), 1- 2(1-\nu)  \}. $\\
  Also, they obtained the following  refinement of  inequalities
  (\ref{re2}):
  \small{
  \begin{align}\label{e10}
  r (\sqrt{ab}-a)^{2} + \nu^{2} (a-b)^{2}&  \leqslant ((1-\nu) a+
  \nu b)^{2}-(a^{1-\nu}b^{\nu })^{2} \nonumber \\
 &  \leqslant (1- \nu)^{2} (a-b)^{2} -r (\sqrt{ab}-b)^{2},
  \end{align}
  }
  where $ 0 \leqslant \nu \leqslant \frac{1}{2} $ and  $ r=\min \{ 2\nu, 1- 2\nu \} $ and
  \small{
  \begin{align}\label{e11}
  r (\sqrt{ab}-b)^{2} +(1- \nu)^{2} (a-b)^{2}&   \leqslant ((1-\nu) a+
  \nu b)^{2}-(a^{1-\nu}b^{\nu })^{2} \nonumber \\
 &  \leqslant \nu^{2} (a-b)^{2} -r (\sqrt{ab}-a)^{2},
  \end{align}}
  where $ \frac{1}{2} \leqslant \nu \leqslant 1 $ and  $ r=\min \{2(1-\nu), 1- 2(1-\nu)  \}
  $.

 Let  $ A, B \in  \mathbb{B}(\mathcal{H}) $  be two operators and $ \nu \in [0, 1].
 $ The
$ \nu- $weighted arithmetic mean of $ A $ and $ B  $,  denoted and
defined by:
$$ A \nabla_{\nu} B=(1-\nu) A+ \nu B.$$
If $ A $  is invertible, $ \nu- $geometric mean and   $ \nu- $Heinz mean  of $ A $ and $ B $ are  defined
 respectively,   as
$$A\sharp_{\nu}B=A^{\frac{1}{2}} (A^{-\frac{1}{2}}BA^{-\frac{1}{2}})^{\nu}A^{\frac{1}{2}} $$
and
$$H_{\nu} (A, B)=\dfrac{A\sharp_{\nu}B+ A\sharp_{1-\nu}B}{2}. $$
 In addition, if both $ A $ and $ B $ are invertible, $ \nu- $harmonic
 mean of $ A $ and $ B, $ denoted by $ A !_{\nu} B,  $ is defined as
    $$ A !_{\nu} B =((1-\nu)A^{-1}+\nu B^{-1})^{-1}.  $$
    When $ \nu=\frac{1}{2}, $ we write  $ A \nabla B, A\sharp B $ and  $ A !  B $
    for brevity, respectively.\\
It is well known that if $ A $ and $ B $ are positive invertible operators, then
\begin{equation*}\label{0e}
A \nabla_{\nu} B \geqslant A\sharp_{\nu}B \geqslant A!_{\nu}B,
\end{equation*}
for $  0 < \nu < 1; $  see \cite{ fu2,  fu1}  for more
information.

Based on the refined Young inequality (\ref{e10}) and its reverse
(\ref{e11}),  J. Zhao and J. Wu \cite{jz}  proved that if  $A, B,
X  \in  \mathbb{M}_{n} $  such that $ A $ and $ B $ are positive
semidefinite, then \small{
\begin{align}\label{b1}
\nu^{2} \|AX-XB\|_{2}^{2} +&r \|A^{ \frac{1}{2}}XB^{\frac{1}{2}}
 -AX\|_{2}^{2}\nonumber\\
 &\leqslant \| (1-\nu)AX+ \nu XB \|_{2}^{2}-\|A^{1-\nu}X B^{\nu}\|_{2}^{2} \nonumber \\
&  \leqslant (1- \nu)^{2} \|AX-XB\|_{2}^{2}+ -r \|A^{\frac{1}{2}}
XB^{\frac{1}{2}} -XB\|_{2}^{2},
\end{align}}
where $ 0 \leqslant \nu \leqslant \frac{1}{2} $ and  $ r=\min \{ 2\nu, 1- 2\nu \} $  and
\small{
\begin{align}\label{b2}
(1- \nu)^{2} \|AX-XB\|_{2}^{2}+&r \|A^{\frac{1}{2}}XB^{
\frac{1}{2}} -XB\|_{2}^{2}\nonumber\\
& \leqslant \| (1-\nu)AX+ \nu XB \|_{2}^{2}- \|A^{1-\nu}X B^{\nu}\|_{2}^{2}\nonumber \\
& \leqslant \nu^{2} \|AX-XB\|_{2}^{2}-r \|A^{\frac{1}{2}}X
B^{\frac{1}{2}} - A X\|_{2}^{2},
 \end{align}}
where $ \frac{1}{2}  \leqslant \nu  \leqslant 1  $ and  $ r=\min
\{2(1-\nu), 1- 2(1-\nu)  \}. $

Their results were generalized by Liao and Wu \cite{jmi}, using
Kantorovich constant.

Furthermore,  some similar results can be found in \cite{alzer,
feng7}.

In addition, in \cite{mo}, the authors investigated on these
inequalities, for the cases that $\nu\leq0$ or $\nu\geq1$. In
these cases, they proved the reverse of some of these
inequalities. Furthermore, in \cite{salemi}, the numerical version
of some of these relations, are discussed.

The main aim of this paper, is to state a generalization of these
inequalities. First, we present some generalizations of numerical
inequalities and base of them we prove some refined  operator
versions of Young inequality and its reverse. Also some
inequalities for Hilbert-Schmidt norm of matrices are obtained.

In this paper, for $0<\nu<1$, the notations $m_k=  \lfloor 2^k\nu
\rfloor $ is  the largest integer not greater than
 $2^k\nu$,  $ r_0=\min \{ \nu,
1-\nu\} $ and $ r_{k}=\min \{ 2r_{k-1}, 1-2r_{k-1} \}  $, for
$k\geq1$.
\section{Numerical results}
We start with some numerical results.
\begin{theorem}\label{l01}
Let $ a, b $ be two positive real numbers and $ \nu \in (0, 1). $
Then
\begin{equation}\label{y1}
 a\nabla_{\nu} b \geqslant a\sharp_{\nu}b+
\sum_{k=0}^{\infty}r_{k}\big[\big(a^{1-\frac{m_k}{2^k}}b^{\frac{m_k}{2^k}}
\big)^{\frac{1}{2}}-\big(a^{1-\frac{m_k+1}{2^k}}b^{\frac{m_k+1}{2^k}}
\big)^{\frac{1}{2}}\big]^{2}.
\end{equation}

In addition, if $\nu=\frac{t}{2^n}$ for some $t, n\in\mathbb{N}$,
then
\begin{equation*}
 a\nabla_{\nu} b = a\sharp_{\nu}b+
\sum_{k=0}^{n-1}r_{k}\big[\big(a^{1-\frac{m_k}{2^k}}b^{\frac{m_k}{2^k}}
\big)^{\frac{1}{2}}-\big(a^{1-\frac{m_k+1}{2^k}}b^{\frac{m_k+1}{2^k}}
\big)^{\frac{1}{2}}\big]^{2}.
\end{equation*}
\end{theorem}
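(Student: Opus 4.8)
The plan is to establish the inequality by iterating the basic Kittaneh–Manasrah refinement~\eqref{re1} infinitely often, keeping careful track of which subinterval of $[0,1]$ the parameter $\nu$ falls into at each stage. First I would observe that~\eqref{re1} can be rewritten in the homogeneous form
\begin{equation*}
a\nabla_\nu b - a\sharp_\nu b \;\geqslant\; r_0\bigl(\sqrt a-\sqrt b\bigr)^2 \;=\; r_0\bigl[(a^{1-\frac{m_0}{2^0}}b^{\frac{m_0}{2^0}})^{1/2}-(a^{1-\frac{m_0+1}{2^0}}b^{\frac{m_0+1}{2^0}})^{1/2}\bigr]^2,
\end{equation*}
since $m_0=\lfloor\nu\rfloor=0$, which is exactly the $k=0$ term of~\eqref{y1}. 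The idea is then to apply the same inequality again, not to $(a,b)$ directly, but to a suitably chosen pair that isolates the ``next binary digit'' of $\nu$. Concretely, writing $\nu' = 2\nu - m_1$ (a rescaling so that $\nu'\in[0,1)$ records the behaviour of $2\nu$ past its integer part $m_1=\lfloor2\nu\rfloor\in\{0,1\}$), one shows that
\begin{equation*}
a\sharp_{\nu} b \;=\; \bigl(a\sharp_{m_1/2}\,b\bigr)\,\sharp_{\nu'}\,\bigl(a\sharp_{(m_1+1)/2}\,b\bigr),
\end{equation*}
because the geometric-mean weights compose additively in the exponent; here $a\sharp_{1/2}b=\sqrt{ab}$ and so on. This is the algebraic heart of the argument.

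The key steps, in order, are: (1) prove the composition identity for $a\sharp_\nu b$ in terms of the two endpoints $a\sharp_{m_k/2^k}b$ and $a\sharp_{(m_k+1)/2^k}b$ and the rescaled parameter, which is a direct exponent computation; (2) apply~\eqref{re1} with the pair $\bigl(a\sharp_{m_k/2^k}b,\ a\sharp_{(m_k+1)/2^k}b\bigr)$ and weight the rescaled $\nu$, noting that the ``$\min$'' constant that appears is precisely $r_k$ by the recursion $r_k=\min\{2r_{k-1},1-2r_{k-1}\}$ and an easy induction showing $r_{k-1}=$ (fractional distance of $2^{k-1}\nu$ to the nearer integer); (3) observe that the arithmetic mean $a\nabla_\nu b$ likewise satisfies $a\nabla_\nu b = (a\sharp_{m_k/2^k}b)\,\nabla_{\nu'}\,(\text{partial arithmetic expression})$ up to a telescoping correction, so that one stage of~\eqref{re1} peels off exactly the $k$-th summand of~\eqref{y1} and leaves a remainder of the same shape with $k$ replaced by $k+1$; (4) iterate $n$ times to get the finite identity/inequality with remainder $a\nabla_{\nu_n}b - a\sharp_{\nu_n}b$ where $\nu_n = 2^n\nu - m_n$, and let $n\to\infty$. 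For the convergence claim, one checks the remainder term is nonnegative (again by~\eqref{re1}, since $0\leqslant r_n(\sqrt{\cdot}-\sqrt{\cdot})^2$), so the partial sums are bounded above by $a\nabla_\nu b - a\sharp_\nu b$; hence the series converges and~\eqref{y1} follows by monotone passage to the limit.

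For the equality case, if $\nu = t/2^n$ then after $n$ iterations the rescaled parameter $\nu_n = 2^n\nu - \lfloor2^n\nu\rfloor = t - t = 0$, so the remainder term is $a\nabla_0 b - a\sharp_0 b = a - a = 0$ exactly, and all terms of index $\geqslant n$ in the series vanish (indeed $m_k/2^k = \nu$ for all $k\geqslant n$, making each bracket zero). Thus the infinite sum truncates to $\sum_{k=0}^{n-1}$ and the inequality in~\eqref{y1} becomes an equality.

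The main obstacle I anticipate is step (3): getting the bookkeeping exactly right so that one application of~\eqref{re1} to the rescaled endpoint pair produces precisely the term $r_k\bigl[(a^{1-m_k/2^k}b^{m_k/2^k})^{1/2}-(a^{1-(m_k+1)/2^k}b^{(m_k+1)/2^k})^{1/2}\bigr]^2$ \emph{and} leaves a genuine remainder of the form $a\nabla_{\nu_{k+1}}b - a\sharp_{\nu_{k+1}}b$ (rescaled), rather than some messier leftover. The subtlety is that the arithmetic mean does not compose as cleanly as the geometric mean, so one must verify that the linear (arithmetic) part reorganizes correctly; I expect this to come down to the identity $(1-\nu)a+\nu b = (1-\nu')\,P_k + \nu'\,P_{k+1}$ for the appropriate partial sums $P_k$, which should hold by a short induction but requires care with the weights $m_k/2^k$ versus $(m_k+1)/2^k$ and the sign of $2r_{k-1}-1$.
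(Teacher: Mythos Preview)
Your proposal is correct and follows essentially the same approach as the paper: both arguments iterate the Kittaneh--Manasrah inequality~\eqref{re1}, at each step replacing the pair $(a,b)$ by the pair of adjacent dyadic geometric means and rescaling the parameter. The ``obstacle'' you anticipate in step~(3) dissolves via the exact identity the paper uses, namely (for $0<\nu<\tfrac12$) $a\nabla_\nu b - r_0(\sqrt a-\sqrt b)^2 = (1-2\nu)a+2\nu\sqrt{ab} = a\,\nabla_{2\nu}\,\sqrt{ab}$, so the arithmetic side recomposes cleanly with the new endpoints $a\sharp_{m_1/2}b$ and $a\sharp_{(m_1+1)/2}b$ and rescaled weight $2\nu$; the case $\tfrac12<\nu<1$ follows by the $a\leftrightarrow b$, $\nu\leftrightarrow 1-\nu$ symmetry.
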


\begin{proof}
 It is
enough to prove that for each $n\in\mathbb{N}\cup\{0\}$,
\begin{equation}\label{y02}
 a\nabla_{\nu} b \geqslant a\sharp_{\nu}b+
\sum_{k=0}^{n}r_{k}\big[\big(a^{1-\frac{m_k}{2^k}}b^{\frac{m_k}{2^k}}
\big)^{\frac{1}{2}}-\big(a^{1-\frac{m_k+1}{2^k}}b^{\frac{m_k+1}{2^k}}
\big)^{\frac{1}{2}}\big]^{2}.
\end{equation}
We prove it by induction. For $n=0$, we get to the well-known
inequality \eqref{re1}. Let inequality \eqref{y02} holds for $n$.

First, let $ 0 < \nu < \frac{1}{2}$. Thus,  we have
\begin{align*}
 a\nabla_{\nu} b -r_0(\sqrt{a}-\sqrt{b})^2&
 = a\nabla_{\nu} b -\nu(\sqrt{a}-\sqrt{b})^2\\
&=2\nu \sqrt{ab}+(1-2\nu)a\\
&=a\nabla_{2\nu}\sqrt{ab}
\end{align*}
Applying inequality \eqref{y02} for two positive numbers $a$ and
$\sqrt{ab}$ and $2\nu\in(0,1)$, we have
\begin{align*}
 a\nabla_{\nu} b
 -r_0(\sqrt{a}-\sqrt{b})^2&=a\nabla_{2\nu}\sqrt{ab}\\
 &\geq a\sharp_{2\nu}\sqrt{ab}+
 \sum_{k=0}^{n}r_{k+1}\big[\big(a^{1-\frac{m_{k+1}}{2^k}}(\sqrt{ab})^{
 \frac{m_{k+1}}{2^k}}
\big)^{\frac{1}{2}} \\&\quad
-\big(a^{1-\frac{m_{k+1}+1}{2^k}}(\sqrt{ab})^{\frac{m_{k+1}+1}{2^k}}
\big)^{\frac{1}{2}}\big]^{2}\\
&=a\sharp_{\nu}b+
\sum_{k=1}^{n+1}r_{k}\big[\big(a^{1-\frac{m_k}{2^k}}b^{\frac{m_k}{2^k}}
\big)^{\frac{1}{2}}-\big(a^{1-\frac{m_k+1}{2^k}}b^{\frac{m_k+1}{2^k}}
\big)^{\frac{1}{2}}\big]^{2}.
\end{align*}

For $\frac{1}{2}<\nu<1$, we can apply the first part for $1-\nu$
and replace $a$ and $b$. Note that $[2^k(1-\nu)]=2^k-[2^k\nu]-1$
if $2^k\nu$ is not integer. Thus, if $2^k\nu$ is not integer for
each $k$, the inequality follows.

Now, let $\nu=\frac{t}{2^{\ell}}$ for some odd number $t$ and
$\ell\in\mathbb{N}\cup\{0\}$. Since for each $i<\ell$, the
coefficient $r_i\leq\frac{1}{2}$ is of the form
$\frac{t_i}{2^{\ell-i}}$, it can be concluded that $r_{\ell}=0$
and so $r_k=0$ for all $k\geq n$. On the other hand $2^k\nu$ is
not integer for $k< \ell$. So the result follows.

A similar argument, shows the equality holds when
$\nu=\frac{t}{2^n}$.
\end{proof}
\begin{remark}
Note that the series appear in this theorem is a positive series
with a finite upper bound. So it is convergent. This fact is also
satisfies with all other series appear in this note.
\end{remark}

Changing the place of numbers $a$ and $b$ in inequality
\eqref{y1}, we can state the following result for Heinz mean.
\begin{corollary}
Let $ a, b $ be two positive real numbers and $ \nu \in (0, 1). $
Then
\begin{equation*}
 a\nabla b \geqslant H_{\nu}(a,b)+
\sum_{k=0}^{\infty}r_{k}\big[H_{\frac{m_k}{2^k}}(a,b)-2H_{\frac{2m_k+1}{2^{k+1}}}(a,b)
+H_{\frac{m_k+1}{2^k}}(a,b)\big].
\end{equation*}
\end{corollary}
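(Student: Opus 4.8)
The plan is to apply Theorem~\ref{l01} twice and then average. Since the integers $m_k=\lfloor 2^k\nu\rfloor$ and the coefficients $r_k$ depend only on $\nu$ and not on the ordering of $a$ and $b$, inequality \eqref{y1} applied first to the pair $(a,b)$ and then to the pair $(b,a)$ yields
\begin{align*}
 a\nabla_{\nu} b &\geqslant a^{1-\nu}b^{\nu}+
\sum_{k=0}^{\infty}r_{k}\Big[\big(a^{1-\frac{m_k}{2^k}}b^{\frac{m_k}{2^k}}\big)^{\frac12}-\big(a^{1-\frac{m_k+1}{2^k}}b^{\frac{m_k+1}{2^k}}\big)^{\frac12}\Big]^{2},\\
 b\nabla_{\nu} a &\geqslant a^{\nu}b^{1-\nu}+
\sum_{k=0}^{\infty}r_{k}\Big[\big(a^{\frac{m_k}{2^k}}b^{1-\frac{m_k}{2^k}}\big)^{\frac12}-\big(a^{\frac{m_k+1}{2^k}}b^{1-\frac{m_k+1}{2^k}}\big)^{\frac12}\Big]^{2}.
\end{align*}
Adding the two inequalities, the left-hand sides combine to $a\nabla_\nu b+b\nabla_\nu a=a+b=2(a\nabla b)$, and the geometric-mean terms combine to $a^{1-\nu}b^{\nu}+a^{\nu}b^{1-\nu}=2H_\nu(a,b)$.

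It then remains to rewrite, for each fixed $k$, the sum of the two squared differences in terms of Heinz means. Abbreviating $p=\frac{m_k}{2^k}$ and $q=\frac{m_k+1}{2^k}$, I would expand the first bracket as $a^{1-p}b^{p}+a^{1-q}b^{q}-2a^{1-\frac{p+q}{2}}b^{\frac{p+q}{2}}$, using $\big(a^{1-p}b^{p}\cdot a^{1-q}b^{q}\big)^{1/2}=a^{1-\frac{p+q}{2}}b^{\frac{p+q}{2}}$, and likewise the second bracket as $a^{p}b^{1-p}+a^{q}b^{1-q}-2a^{\frac{p+q}{2}}b^{1-\frac{p+q}{2}}$. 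Since $\frac{p+q}{2}=\frac{2m_k+1}{2^{k+1}}$, adding these two expressions and pairing the conjugate monomials produces exactly
$$2H_{\frac{m_k}{2^k}}(a,b)+2H_{\frac{m_k+1}{2^k}}(a,b)-4H_{\frac{2m_k+1}{2^{k+1}}}(a,b).$$
Substituting this identity into the summed inequality and dividing both sides by $2$ gives the assertion.

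No convergence issue arises: each summand $H_{\frac{m_k}{2^k}}(a,b)-2H_{\frac{2m_k+1}{2^{k+1}}}(a,b)+H_{\frac{m_k+1}{2^k}}(a,b)$ equals $\frac12$ of a sum of two squares, so the series is nonnegative and is one half of the sum of the two convergent series obtained from \eqref{y1} for the pairs $(a,b)$ and $(b,a)$; the Remark above then applies. The only place demanding care is the exponent bookkeeping in the second paragraph---verifying that the cross term in each squared difference carries the Heinz parameter $\frac{2m_k+1}{2^{k+1}}$ and that the $a\leftrightarrow b$ interchanged copies recombine into genuine Heinz means---while the remaining manipulations are routine.
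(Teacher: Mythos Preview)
Your argument is correct and is exactly the approach the paper indicates: apply inequality \eqref{y1} to both $(a,b)$ and $(b,a)$ and average, then recognize the resulting symmetric combinations as Heinz means. The paper merely records this as ``changing the place of numbers $a$ and $b$ in inequality \eqref{y1}'' without spelling out the exponent bookkeeping you carried through.
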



In the  following theorem, we state a reverse of Young inequality.
\begin{theorem}\label{l2}
Let $ a, b $ be two positive real numbers and $ \nu \in (0, 1). $
Then
\begin{equation}\label{y2}
 a\nabla_{\nu} b \leqslant a\sharp_{\nu}b+(\sqrt{a}-\sqrt{b})^2-
\sum_{k=0}^{\infty}r_{k}\big[\big(a^{\frac{m_k}{2^k}}b^{1-\frac{m_k}{2^k}}
\big)^{\frac{1}{2}}-\big(a^{\frac{m_k+1}{2^k}}b^{1-\frac{m_k+1}{2^k}}
\big)^{\frac{1}{2}}\big]^{2}.
\end{equation}

\end{theorem}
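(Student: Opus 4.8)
The plan is to mirror the proof of Theorem~\ref{l01} exactly, using the \emph{reverse} Young inequality \eqref{re1} as the base case instead of the forward one, and exploiting the same self-similar structure. Concretely, I would show by induction on $n\in\mathbb{N}\cup\{0\}$ that
\begin{equation*}
 a\nabla_{\nu} b \leqslant a\sharp_{\nu}b+(\sqrt{a}-\sqrt{b})^2-
\sum_{k=0}^{n}r_{k}\big[\big(a^{\frac{m_k}{2^k}}b^{1-\frac{m_k}{2^k}}
\big)^{\frac{1}{2}}-\big(a^{\frac{m_k+1}{2^k}}b^{1-\frac{m_k+1}{2^k}}
\big)^{\frac{1}{2}}\big]^{2},
\end{equation*}
and then let $n\to\infty$ (the tail of the subtracted series is a convergent positive series by the Remark, so passing to the limit only weakens the bound in the right direction — actually one must be slightly careful here, since we are subtracting: letting $n\to\infty$ \emph{decreases} the right-hand side, so the finite-$n$ inequality is the stronger statement and the infinite version follows). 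For $n=0$ the claimed inequality is precisely the right-hand inequality in \eqref{re1}, namely $a\nabla_\nu b\le a\sharp_\nu b+s(\sqrt a-\sqrt b)^2$ with $s=\max\{\nu,1-\nu\}$, rewritten as $a\nabla_\nu b\le a\sharp_\nu b+(\sqrt a-\sqrt b)^2-r_0(\sqrt a-\sqrt b)^2$ since $s=1-r_0$.

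For the inductive step I would treat $0<\nu<\tfrac12$ first. The key algebraic identity is the same reduction used in Theorem~\ref{l01}: writing $a\nabla_\nu b = 2\nu\sqrt{ab}+(1-2\nu)a = a\nabla_{2\nu}\sqrt{ab}$, one gets
\begin{equation*}
a\nabla_\nu b + r_0(\sqrt a-\sqrt b)^2 - (\sqrt a -\sqrt b)^2 = \big(\sqrt{ab}\big)\nabla_{2\nu} a + r_0(\sqrt a-\sqrt b)^2 - (\sqrt a-\sqrt b)^2,
\end{equation*}
but for the reverse inequality it is cleaner to apply the induction hypothesis (at level $n$, with parameter $2\nu$) to the pair $(\sqrt{ab},\,a)$ in \emph{that} order, so that the ``$b$''-slot of the hypothesis is filled by $a$ and the ``$a$''-slot by $\sqrt{ab}$. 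This swap is exactly why the exponents in \eqref{y2} are $a^{m_k/2^k}b^{1-m_k/2^k}$ (the geometric mean weighted toward $b$) rather than toward $a$ as in \eqref{y1}. After substituting, the term $(\sqrt a-\sqrt b)^2$ coming from the hypothesis combines with the $-(\sqrt a-\sqrt b)^2+r_0(\sqrt a-\sqrt b)^2$ correction, the geometric-mean term reassembles as $a\sharp_\nu b$ via $(\sqrt{ab})\sharp_{2\nu}a = a\sharp_\nu b$ and the index identity $m_{k+1}=\lfloor 2^{k+1}\nu\rfloor$ relating the indices at level $2\nu$ to those at level $\nu$, and the sum reindexes from $\sum_{k=0}^n$ (in the hypothesis, with $r_{k+1}$) to $\sum_{k=1}^{n+1}$, which together with the $k=0$ term $-r_0(\sqrt a-\sqrt b)^2$ gives $\sum_{k=0}^{n+1}$. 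The case $\tfrac12<\nu<1$ follows by applying the $\nu<\tfrac12$ case to $1-\nu$ after swapping $a\leftrightarrow b$, using $\lfloor 2^k(1-\nu)\rfloor = 2^k-\lfloor 2^k\nu\rfloor-1$ when $2^k\nu\notin\mathbb{Z}$; and dyadic rational $\nu$ are handled exactly as in Theorem~\ref{l01}, where eventually $r_k=0$.

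The main obstacle, as in the previous theorem, is bookkeeping the index shift correctly: one must verify that when the induction hypothesis is applied with parameter $2\nu$, the quantity $\lfloor 2^k(2\nu)\rfloor$ equals $\lfloor 2^{k+1}\nu\rfloor = m_{k+1}$, and hence the exponents $1-\frac{m_{k+1}}{2^k}$ appearing there (in the variables $a$ and $\sqrt{ab}$) translate, after substituting $\sqrt{ab}=a^{1/2}b^{1/2}$, into the exponents $\frac{m_{k+1}}{2^{k+1}}$, $1-\frac{m_{k+1}}{2^{k+1}}$ of $a,b$ in \eqref{y2} at level $k+1$. A secondary subtlety worth stating explicitly is the direction of the limit $n\to\infty$: since the correction series is subtracted, the inequality for each finite $n$ implies the one for $n+1$ trivially only if the added term is nonnegative, which it is, so in fact the finite-$n$ bound is \emph{stronger} and \eqref{y2} is the limit; one should phrase this so the reader sees that convergence of the series (guaranteed by the Remark) is what makes the infinite sum well-defined, and the inequality then holds a fortiori. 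Everything else is the same routine manipulation of weighted means already carried out in the proof of Theorem~\ref{l01}.
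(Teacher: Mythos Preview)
Your inductive step contains a concrete error. For $0<\nu<\tfrac12$ the reduction identity is $a\nabla_\nu b-\nu(\sqrt a-\sqrt b)^2=a\nabla_{2\nu}\sqrt{ab}$, which forces you to apply the hypothesis to the pair $(a,\sqrt{ab})$ in \emph{that} order, not $(\sqrt{ab},a)$; with your order the arithmetic mean is $(1-2\nu)\sqrt{ab}+2\nu a$, which is not $a\nabla_\nu b$ minus any multiple of $(\sqrt a-\sqrt b)^2$. Relatedly, your claimed identity $(\sqrt{ab})\sharp_{2\nu}a=a\sharp_\nu b$ is false (the left side equals $a^{(1+2\nu)/2}b^{(1-2\nu)/2}$); what is true is $a\sharp_{2\nu}\sqrt{ab}=a\sharp_\nu b$. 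But if you apply the reverse hypothesis to $(a,\sqrt{ab})$, the summand $(a')^{m'_k/2^k}(b')^{1-m'_k/2^k}$ becomes $a^{1/2+m_{k+1}/2^{k+1}}b^{1/2-m_{k+1}/2^{k+1}}$, \emph{not} the $a^{m_{k+1}/2^{k+1}}b^{1-m_{k+1}/2^{k+1}}$ that \eqref{y2} requires. So the bookkeeping you flag as ``the main obstacle'' actually fails, not merely needs care: the self-similar reduction of Theorem~\ref{l01} reproduces the exponent pattern of \eqref{y1}, not of \eqref{y2}.

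The paper avoids all of this by deriving \eqref{y2} from \eqref{y1} in three lines, with no new induction. Swap $a\leftrightarrow b$ in \eqref{y1} to get
\[
b\nabla_\nu a \;\geq\; b\sharp_\nu a+\sum_{k\ge0}r_k\big[(a^{m_k/2^k}b^{1-m_k/2^k})^{1/2}-(a^{(m_k+1)/2^k}b^{1-(m_k+1)/2^k})^{1/2}\big]^2,
\]
then use the algebraic identity $(\sqrt a-\sqrt b)^2-a\nabla_\nu b=b\nabla_\nu a-2\sqrt{ab}$ together with the Heinz-type bound $a\sharp_\nu b+b\sharp_\nu a\geq 2\sqrt{ab}$ to replace $b\sharp_\nu a-2\sqrt{ab}$ by $-a\sharp_\nu b$. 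Rearranging gives \eqref{y2} directly. This is both shorter and explains structurally why the exponents in \eqref{y2} are those of \eqref{y1} with $a$ and $b$ interchanged.
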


\begin{proof}
By $a\sharp_{\nu}b+b\sharp_{\nu}a\geq2\sqrt{ab}$, and inequality
\eqref{y1}, we have
\begin{align*}
(\sqrt{a}-\sqrt{b})^2-a\nabla_{\nu}b&=b\nabla_{\nu}a-2\sqrt{ab}\\
&\geq
-a\sharp_{\nu}b+\sum_{k=0}^{\infty}r_{k}\big[\big(a^{\frac{m_k}{2^k}}b^{1-\frac{m_k}{2^k}}
\big)^{\frac{1}{2}}-\big(a^{\frac{m_k+1}{2^k}}b^{1-\frac{m_k+1}{2^k}}
\big)^{\frac{1}{2}}\big]^{2}.
\end{align*}
So the result follows.
\end{proof}

\begin{corollary}
Let $ a, b $ be two positive real numbers and $ \nu \in (0, 1). $
Then
\begin{equation*}
 a\nabla b \leqslant H_{\nu}(a,b)+(\sqrt{a}-\sqrt{b})^2-
\sum_{k=0}^{\infty}r_{k}\big[H_{\frac{m_k}{2^k}}(a,b)-2H_{\frac{2m_k+1}{2^{k+1}}}(a,b)
+H_{\frac{m_k+1}{2^k}}(a,b)\big].
\end{equation*}
\end{corollary}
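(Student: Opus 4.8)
The plan is to derive this corollary from Theorem~\ref{l2} exactly as the preceding corollary was derived from Theorem~\ref{l01}: by symmetrizing the numerical inequality \eqref{y2} in the two variables.

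First I would write \eqref{y2} as it stands, and then write a second copy of it with $a$ and $b$ interchanged. Since the constants $r_k$ and the integers $m_k=\lfloor 2^k\nu\rfloor$ depend only on $\nu$, they are unaffected by this swap, and only the exponents $\tfrac{m_k}{2^k}$ and $1-\tfrac{m_k}{2^k}$ inside the summand exchange places. Adding the two inequalities and dividing by $2$, I would use that $\tfrac12\big(a\nabla_\nu b+b\nabla_\nu a\big)=a\nabla b$, that $(\sqrt a-\sqrt b)^2$ is symmetric in $a,b$, and that $\tfrac12\big(a\sharp_\nu b+b\sharp_\nu a\big)=\tfrac12\big(a\sharp_\nu b+a\sharp_{1-\nu}b\big)=H_\nu(a,b)$.

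It then remains to recognize the averaged summand as a combination of Heinz means. Writing $p=\tfrac{m_k}{2^k}$ and $q=\tfrac{m_k+1}{2^k}$, expansion of $\big((a^{p}b^{1-p})^{1/2}-(a^{q}b^{1-q})^{1/2}\big)^2$ gives $a^{p}b^{1-p}-2a^{\frac{p+q}{2}}b^{1-\frac{p+q}{2}}+a^{q}b^{1-q}$, while the interchanged square gives $a^{1-p}b^{p}-2a^{1-\frac{p+q}{2}}b^{\frac{p+q}{2}}+a^{1-q}b^{q}$; their average is precisely $H_{p}(a,b)-2H_{\frac{p+q}{2}}(a,b)+H_{q}(a,b)$, and since $\tfrac{p+q}{2}=\tfrac{2m_k+1}{2^{k+1}}$ this is exactly the bracket in the statement. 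Substituting it back into the symmetrized inequality gives the claim, the series converging by the remark following Theorem~\ref{l01}. There is no genuinely hard step; the only point to keep an eye on is that, because we invoke \eqref{y2} directly rather than its $1-\nu$ form, the floor-function indices $m_k$ are preserved under $a\leftrightarrow b$, so no case distinction on the dyadic form of $\nu$ is required.
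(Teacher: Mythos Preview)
Your argument is correct and is exactly the approach the paper intends: just as the corollary after Theorem~\ref{l01} is obtained by interchanging $a$ and $b$ in \eqref{y1} and averaging, this corollary follows by the same symmetrization applied to \eqref{y2}. Your identification of the averaged summand with $H_{\frac{m_k}{2^k}}(a,b)-2H_{\frac{2m_k+1}{2^{k+1}}}(a,b)+H_{\frac{m_k+1}{2^k}}(a,b)$ and your observation that swapping $a\leftrightarrow b$ leaves the $m_k$ and $r_k$ untouched are both on point.
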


\begin{remark}
Replacing $ a $ and $ b $ by their squares in (\ref{y1}) and (\ref{y2}), respectively, we obtain
\begin{equation}\label{y3}
  a^{2}\nabla_{\nu} b^{2} \geqslant a^{2}\sharp_{\nu}b^{2}+
 \sum_{k=0}^{\infty}r_{k} \big[  a^{1-\frac{m_k}{2^k}}b^{\frac{m_k}{2^k}}
  - a^{1-\frac{m_k+1}{2^k}}b^{\frac{m_k+1}{2^k}}
   \big]^{2}
\end{equation}
and

\begin{equation}\label{y4}
 a^{2}\nabla_{\nu} b^{2} \leqslant a^{2}\sharp_{\nu}b^{2}+(a - b)^2-
\sum_{k=0}^{\infty}r_{k}\big[a^{\frac{m_k}{2^k}}b^{1-\frac{m_k}{2^k}}
- a^{\frac{m_k+1}{2^k}}b^{1-\frac{m_k+1}{2^k}} \big]^{2}.
\end{equation}

\end{remark}
The following two theorems, are useful to prove a version of these
inequalities for the Hilbert-Schmidt norm of matrices.
\begin{theorem}\label{l3}
Let $ a, b $ be two positive real numbers and $ \nu \in (0, 1). $
Then
\begin{equation}\label{y5}
 ( a \nabla_{\nu} b)^{2} \geqslant ( a \sharp_{\nu}b )^{2}+ r_{0}^{2}(a-b)^{2}+
 \sum_{k=1}^{\infty}r_{k} \big[  a^{1-\frac{m_k}{2^k}}b^{\frac{m_k}{2^k}}
  - a^{1-\frac{m_k+1}{2^k}}b^{\frac{m_k+1}{2^k}}
   \big]^{2}.
\end{equation}

\end{theorem}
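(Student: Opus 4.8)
The plan is to mimic exactly the inductive scheme already used in the proof of Theorem~\ref{l01}, but now keeping track of squares. As in that proof, it suffices to establish the finite version
\begin{equation*}
( a \nabla_{\nu} b)^{2} \geqslant ( a \sharp_{\nu}b )^{2}+ r_{0}^{2}(a-b)^{2}+
 \sum_{k=1}^{n}r_{k} \big[  a^{1-\frac{m_k}{2^k}}b^{\frac{m_k}{2^k}}
  - a^{1-\frac{m_k+1}{2^k}}b^{\frac{m_k+1}{2^k}}\big]^{2}
\end{equation*}
for every $n\in\mathbb{N}\cup\{0\}$ and then pass to the limit, using the convergence remark. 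The base case $n=0$ is precisely the left-hand inequality of \eqref{re2}, which is already recorded in the introduction. So the whole content is the induction step.

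For the induction step I would first treat $0<\nu<\tfrac12$. The key algebraic identity is the same one that drove Theorem~\ref{l01}: write $a\nabla_\nu b = a\nabla_{2\nu}\sqrt{ab} + r_0(\sqrt a-\sqrt b)^2$, where $r_0=\nu$. Squaring,
\begin{equation*}
(a\nabla_\nu b)^2 = (a\nabla_{2\nu}\sqrt{ab})^2 + 2\nu(\sqrt a-\sqrt b)^2\, a\nabla_{2\nu}\sqrt{ab} + \nu^2(\sqrt a-\sqrt b)^4 .
\end{equation*}
Now apply the induction hypothesis \eqref{y5} (at level $n$) to the pair $(a,\sqrt{ab})$ with parameter $2\nu$: this bounds $(a\nabla_{2\nu}\sqrt{ab})^2$ from below by $(a\sharp_{2\nu}\sqrt{ab})^2 + r_1^2(a-\sqrt{ab})^2 + \sum_{k=1}^{n} r_{k+1}[\cdots]^2$, and as in Theorem~\ref{l01} one checks that $a\sharp_{2\nu}\sqrt{ab} = a\sharp_\nu b$ and that the sum $\sum_{k=1}^n r_{k+1}[\,a^{1-\frac{m_{k+1}}{2^k}}(\sqrt{ab})^{\frac{m_{k+1}}{2^k}} - \cdots]^2$ is exactly the tail $\sum_{k=2}^{n+1} r_k[\cdots]^2$ after reindexing and using $m_{k+1}(\nu)=m_k(2\nu)$, $\sqrt{ab}=a^{1/2}b^{1/2}$. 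It remains to verify that the leftover terms reassemble correctly, namely that
\begin{equation*}
r_1^2(a-\sqrt{ab})^2 + 2\nu(\sqrt a-\sqrt b)^2\,a\sharp_\nu b + \nu^2(\sqrt a-\sqrt b)^4 \;\geqslant\; r_0^2(a-b)^2 + r_1\big[a^{1-\frac{m_1}{2}}b^{\frac{m_1}{2}} - a^{1-\frac{m_1+1}{2}}b^{\frac{m_1+1}{2}}\big]^2 .
\end{equation*}
Since $0<\nu<\tfrac12$ we have $m_1=\lfloor 2\nu\rfloor=0$, so the $k=1$ bracket on the right is $[\sqrt a - (\sqrt{ab})^{1/2}]^2 = (\sqrt a - a^{1/4}b^{1/4})^2$, and $r_1=\min\{2\nu,1-2\nu\}$; also $a-\sqrt{ab} = \sqrt a(\sqrt a-\sqrt b)\cdot\frac{\sqrt a}{\sqrt a + \sqrt b}$ — or more usefully $a-\sqrt{ab}=\sqrt a(\sqrt a-\sqrt b)$. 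Dividing through by $(\sqrt a - \sqrt b)^2$ and writing $x=\sqrt a$, $y=\sqrt b$, this reduces to a concrete inequality among $x,y$ with coefficients built from $\nu$; the factor $2\nu\, a\sharp_\nu b = 2\nu\, x^{2(1-\nu)}y^{2\nu}$ together with $r_0^2(a-b)^2=\nu^2(x-y)^2(x+y)^2$ should make it an identity-plus-nonnegative-remainder after expanding, exactly in the spirit of the elementary manipulations in \cite{jz} leading to \eqref{e10}. For $\tfrac12<\nu<1$ one swaps $a\leftrightarrow b$ and replaces $\nu$ by $1-\nu$, using $\lfloor 2^k(1-\nu)\rfloor = 2^k-\lfloor 2^k\nu\rfloor-1$ when $2^k\nu\notin\mathbb{Z}$, precisely as in Theorem~\ref{l01}; the dyadic-rational case $\nu=t/2^\ell$ is handled by the same observation that $r_k=0$ eventually, so the series terminates and the finite identity applies.

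The main obstacle I anticipate is the bookkeeping in that last reassembly step: one must be careful that the ``extra'' geometric-mean factor $a\sharp_{2\nu}\sqrt{ab}$ produced by squaring is correctly identified as $a\sharp_\nu b$ and that the cross term $2\nu(\sqrt a-\sqrt b)^2\,a\nabla_{2\nu}\sqrt{ab}$, after replacing $a\nabla_{2\nu}\sqrt{ab}$ by its lower bound, still dominates the target $r_0^2(a-b)^2$ plus the new $k=1$ summand — this is not automatic and requires the explicit elementary estimate above, which is the genuine computational core (everything else is reindexing already done in Theorem~\ref{l01}). Once that single scalar inequality in $x=\sqrt a,\ y=\sqrt b$ is verified, the induction closes and letting $n\to\infty$ gives \eqref{y5}.
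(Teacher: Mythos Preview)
Your proposed reassembly step, as written, fails. Concretely, take $\nu=\tfrac14$, $a=1$, $b=4$. Then $r_0=\tfrac14$, $r_1=\tfrac12$, $m_1=0$, $a\sharp_\nu b=\sqrt2$, $(\sqrt a-\sqrt b)^2=1$, $(a-\sqrt{ab})^2=1$, $(a-b)^2=9$, and the $k=1$ bracket equals $(a-\sqrt{ab})^2=1$. Your claimed inequality
\[
r_1^2(a-\sqrt{ab})^2+2\nu(\sqrt a-\sqrt b)^2\,a\sharp_\nu b+\nu^2(\sqrt a-\sqrt b)^4\ \geqslant\ r_0^2(a-b)^2+r_1(a-\sqrt{ab})^2
\]
becomes $\tfrac14+\tfrac{\sqrt2}{2}+\tfrac1{16}\geqslant \tfrac9{16}+\tfrac12$, i.e.\ $\tfrac5{16}+\tfrac{\sqrt2}{2}\geqslant\tfrac{17}{16}$, which is false ($\approx1.020<1.063$). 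The error is replacing $a\nabla_{2\nu}\sqrt{ab}$ in the cross term by its lower bound $a\sharp_\nu b$: this discards exactly the amount you need. If instead you keep the full $a\nabla_{2\nu}\sqrt{ab}=(1-2\nu)a+2\nu\sqrt{ab}$ in the cross term, a short calculation (set $x=\sqrt a$, $y=\sqrt b$, divide by $(x-y)^2$) shows the reassembly holds with \emph{equality}, since $r_1(1-r_1)=2\nu(1-2\nu)$. So your inductive scheme is salvageable, but not as you stated it.

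That said, the paper bypasses all of this. Its proof is two lines: from the elementary identity
\[
(a\nabla_\nu b)^2 - r_0^2(a-b)^2 \;=\; a^2\nabla_\nu b^2 - r_0(a-b)^2
\]
(which follows from $(a\nabla_\nu b)^2=a^2\nabla_\nu b^2-\nu(1-\nu)(a-b)^2$ together with $\nu(1-\nu)=r_0(1-r_0)$, hence $\nu(1-\nu)+r_0^2=r_0$), one applies \eqref{y3}, i.e.\ Theorem~\ref{l01} with $a,b$ replaced by $a^2,b^2$. The $k=0$ term of that sum is precisely $r_0(a-b)^2$, so it cancels and the tail $\sum_{k\geqslant1}$ remains. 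No new induction is needed; the squared version is a corollary of the unsquared one.
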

\begin{proof}
 By (\ref{y3}), we have
\begin{align*}
( a \nabla_{\nu} b)^{2} -  r_{0}^{2}(a-b)^{2} &=a^{2} \nabla_{\nu} b^{2}-r_{0} (a-b)^{2}\\
& \geq  ( a \sharp_{\nu} b)^{2} + \sum_{k=1}^{\infty}r_{k} \big[  a^{1-
\frac{m_k}{2^k}}b^{\frac{m_k}{2^k}}
  - a^{1-\frac{m_k+1}{2^k}}b^{\frac{m_k+1}{2^k}}
   \big]^{2}
\end{align*}
\end{proof}
\begin{theorem}\label{l4}
Let $ a, b $ be two positive real numbers and $ \nu \in (0, 1). $
Then
\begin{equation}\label{y6}
 ( a \nabla_{\nu} b)^{2} \leqslant ( a \sharp_{\nu}b )^{2}+ (1-r_{0})^{2}(a-b)^{2}-
 \sum_{k=1}^{\infty}r_{k} \big[  a^{1-\frac{m_k}{2^k}}b^{\frac{m_k}{2^k}}
  - a^{1-\frac{m_k+1}{2^k}}b^{\frac{m_k+1}{2^k}}
   \big]^{2}.
\end{equation}

\end{theorem}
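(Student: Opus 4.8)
The plan is to mirror the proof of Theorem~\ref{l3}, but starting from the reverse Young inequality \eqref{y4} instead of \eqref{y3}. Recall that \eqref{y4} reads
$$
 a^{2}\nabla_{\nu} b^{2} \leqslant a^{2}\sharp_{\nu}b^{2}+(a - b)^2-
\sum_{k=0}^{\infty}r_{k}\big[a^{\frac{m_k}{2^k}}b^{1-\frac{m_k}{2^k}}
- a^{\frac{m_k+1}{2^k}}b^{1-\frac{m_k+1}{2^k}} \big]^{2}.
$$
The key algebraic identity that makes everything work is the same one used in Theorem~\ref{l3}: since $r_0=\min\{\nu,1-\nu\}$, a direct expansion gives
$$
(a\nabla_\nu b)^2 - (1-r_0)^2(a-b)^2 = a^2\nabla_\nu b^2 - r_0(a-b)^2 + \big(r_0 - (1-r_0)^2\big)(a-b)^2 .
$$
Actually a cleaner route, and the one I would carry out, is to first verify the purely numerical identity $(a\nabla_\nu b)^2 - (1-r_0)^2(a-b)^2 = a^2\nabla_\nu b^2 - r_0(a-b)^2$; this is the exact analogue of the identity $(a\nabla_\nu b)^2 - r_0^2(a-b)^2 = a^2\nabla_\nu b^2 - r_0(a-b)^2$ that is used (implicitly) in the proof of Theorem~\ref{l3}. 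Both reduce to checking $(1-\nu)^2 a^2 + \nu^2 b^2 + 2\nu(1-\nu)ab - (1-r_0)^2(a-b)^2 = (1-\nu)a^2 + \nu b^2 - r_0(a-b)^2$, equivalently $2\nu(1-\nu)ab - (1-r_0)^2(a-b)^2 = -\nu(1-\nu)(a-b)^2 \cdot 2 \cdot \tfrac12 - r_0(a-b)^2 + \dots$; one just collects the coefficients of $a^2$, $ab$, $b^2$ on both sides and uses $r_0(1-r_0)=\nu(1-\nu)$, which holds because $\{r_0,1-r_0\}=\{\nu,1-\nu\}$.

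Granting that identity, the proof is two lines: starting from the left side of \eqref{y6},
$$
(a\nabla_\nu b)^2 - (1-r_0)^2(a-b)^2 = a^2\nabla_\nu b^2 - r_0(a-b)^2
\leqslant (a\sharp_\nu b)^2 - \sum_{k=1}^{\infty} r_k\big[a^{1-\frac{m_k}{2^k}}b^{\frac{m_k}{2^k}} - a^{1-\frac{m_k+1}{2^k}}b^{\frac{m_k+1}{2^k}}\big]^2,
$$
where the inequality is exactly \eqref{y3} after moving the $k=0$ term $r_0(a-b)^2$ to the left (note $a-b = (a^{1-0}b^{0})^{\,} - (a^{1-1}b^{1})^{\,}$, so the $k=0$ summand in \eqref{y3} is $r_0(a-b)^2$, matching $r_0^2(a-b)^2$ only after the square is accounted for — here I should be careful: in \eqref{y3} the $k=0$ term is $r_0\,[a-b]^2$, not $r_0^2[a-b]^2$). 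Rearranging yields \eqref{y6}.

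Wait — there is a subtlety I must flag, and it is the place I expect the only real friction. The reverse inequality \eqref{y4} involves the \emph{transposed} exponents $a^{\frac{m_k}{2^k}}b^{1-\frac{m_k}{2^k}}$, whereas \eqref{y6} as stated involves $a^{1-\frac{m_k}{2^k}}b^{\frac{m_k}{2^k}}$. So the honest derivation is \emph{not} from \eqref{y4} but from \eqref{y3}: rewrite the $k=0$ term of \eqref{y3}, which is $r_0\big(a-b\big)^2 = r_0(a-b)^2$ — note $a^{1-m_0/2^0}b^{m_0/2^0}=a$ and $a^{1-(m_0+1)/2^0}b^{(m_0+1)/2^0}=b$ since $m_0=\lfloor \nu\rfloor = 0$ — and then move it across using the identity above. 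Thus the correct chain is: \eqref{y3} $\Rightarrow$ $a^2\nabla_\nu b^2 - r_0(a-b)^2 \geqslant (a\sharp_\nu b)^2 + \sum_{k\geq 1}r_k[\cdots]^2$, and combining with the identity $(a\nabla_\nu b)^2 - (1-r_0)^2(a-b)^2 = a^2\nabla_\nu b^2 - r_0(a-b)^2$ gives \eqref{y6} with the sum carrying a \emph{minus} sign only if \eqref{y6} were an upper bound — but \eqref{y3} gives a \emph{lower} bound on $a^2\nabla_\nu b^2 - r_0(a-b)^2$. Reconciling the direction: $(a\nabla_\nu b)^2 = a^2\nabla_\nu b^2 - r_0(a-b)^2 + (1-r_0)^2(a-b)^2$, and $a^2\nabla_\nu b^2 - r_0(a-b)^2 = (a\sharp_\nu b)^2 + \big[a^2\nabla_\nu b^2 - (a\sharp_\nu b)^2 - r_0(a-b)^2\big]$; by \eqref{y4} the bracket is $\leqslant (a-b)^2 - r_0(a-b)^2 - \sum_{k\geq1}r_k[a^{m_k/2^k}b^{1-m_k/2^k}-a^{(m_k+1)/2^k}b^{1-(m_k+1)/2^k}]^2 = (1-r_0)(a-b)^2 - \sum_{k\geq 1}r_k[\cdots]^2$. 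Hmm, that produces $(1-r_0)$, not $(1-r_0)^2$. So the right tool really is \eqref{y4} combined with the observation that the transposed-exponent sum, after the substitution $a\leftrightarrow b$, becomes the asserted sum (the summand is symmetric under $a\leftrightarrow b$ up to reindexing $m_k \mapsto 2^k - m_k$, which is legitimate since replacing $\nu$ by $1-\nu$ replaces $m_k$ by $2^k-m_k-1$ and leaves $r_k$ unchanged). The bookkeeping of which exponents appear is the one genuinely delicate point; everything else is the coefficient identity $r_0(1-r_0)=\nu(1-\nu)$ and term-by-term comparison.
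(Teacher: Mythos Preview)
Your overall plan---combine the algebraic identity relating $(a\nabla_\nu b)^2$ and $a^2\nabla_\nu b^2$ with inequality \eqref{y4}---is exactly the paper's approach. The genuine gap is that the identity you wrote down is wrong. You claim
\[
(a\nabla_\nu b)^2 - (1-r_0)^2(a-b)^2 \;=\; a^2\nabla_\nu b^2 - r_0(a-b)^2,
\]
and even assert this has the \emph{same} right-hand side as the Theorem~\ref{l3} identity $(a\nabla_\nu b)^2 - r_0^2(a-b)^2 = a^2\nabla_\nu b^2 - r_0(a-b)^2$. That cannot be, unless $r_0=\tfrac12$. The correct computation is
\[
(a\nabla_\nu b)^2 - a^2\nabla_\nu b^2 = -\nu(1-\nu)(a-b)^2 = -r_0(1-r_0)(a-b)^2,
\]
whence
\[
(a\nabla_\nu b)^2 - (1-r_0)^2(a-b)^2 = a^2\nabla_\nu b^2 - \big[r_0(1-r_0)+(1-r_0)^2\big](a-b)^2 = a^2\nabla_\nu b^2 - (1-r_0)(a-b)^2.
\]
This is precisely the identity the paper uses. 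With it, \eqref{y4} gives
\[
a^2\nabla_\nu b^2 - (1-r_0)(a-b)^2 \;\leqslant\; (a\sharp_\nu b)^2 + r_0(a-b)^2 - \sum_{k=0}^{\infty} r_k[\,\cdots\,]^2,
\]
and since the $k=0$ summand equals $r_0(a-b)^2$, the two cancel and you are done. Your own diagnosis---``that produces $(1-r_0)$, not $(1-r_0)^2$''---was the tell that the identity, not the method, was off.

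Your concern about the transposed exponents is legitimate: \eqref{y4} literally carries the terms $a^{m_k/2^k}b^{1-m_k/2^k}$, while \eqref{y6} is stated with $a^{1-m_k/2^k}b^{m_k/2^k}$. The paper's own proof cites \eqref{y4} but silently writes the latter form, so the discrepancy is present there too; what the argument cleanly establishes is \eqref{y6} with the exponents as in \eqref{y4}. Your proposed fix via $a\leftrightarrow b$, $\nu\leftrightarrow 1-\nu$ does not resolve it as stated, because that swap also changes $a\nabla_\nu b$ and $a\sharp_\nu b$.
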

\begin{proof}
  We have
\begin{align*}
( a \nabla_{\nu} b)^{2} & - (1- r_{0})^{2}(a-b)^{2} \\
&=a^{2} \nabla_{\nu} b^{2}-(1-r_{0}) (a-b)^{2}\\
& \leqslant  ( a \sharp_{\nu} b)^{2}+r_{0}(a-b)^{2}  \\
& \quad - \sum_{k=0}^{\infty}r_{k} \big[  a^{1-\frac{m_k}{2^k}}b^{\frac{m_k}{2^k}}
  - a^{1-\frac{m_k+1}{2^k}}b^{\frac{m_k+1}{2^k}}
   \big]^{2}\\
   & \ \ \ \ \  \ \ \ \ \ \ \ \ \ \ \text{ by  inequality (\ref{y4})}\\
   &=( a \sharp_{\nu} b)^{2}  -\sum_{k=1}^{\infty}r_{k} \big[  a^{1-
   \frac{m_k}{2^k}}b^{\frac{m_k}{2^k}}
     - a^{1-\frac{m_k+1}{2^k}}b^{\frac{m_k+1}{2^k}}
      \big]^{2}.\\
\end{align*}
\end{proof}
\section{Related operator inequalities}
Two state the operator versions of the inequalities obtained in
section 2, we need the following lemma.
\begin{lemma}\cite{f3}\label{l1}
Let $ X \in  \mathbb{B}(\mathcal{H}) $ be self-adjoint  and let $
f $ and $ g $ be continuous real functions such that $ f(t)
\geqslant g(t)$ for all $ t \in \sigma(X) $ (the spectrum of $ X
$). Then $ f(X) \geqslant g(X). $
\end{lemma}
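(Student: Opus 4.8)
The plan is to reduce the statement to the positivity of a single operator and then exploit the multiplicativity of the continuous functional calculus. Set $h = f - g$; by hypothesis $h$ is continuous on $\sigma(X)$ and $h(t) \geqslant 0$ there, and since $f(X) - g(X) = h(X)$ it suffices to prove that $h(X) \geqslant 0$.

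First I would record the only external input: because $X$ is self-adjoint, the map $\Phi \colon C(\sigma(X)) \to \mathbb{B}(\mathcal{H})$ given by $\Phi(\varphi) = \varphi(X)$ is a unital $*$-homomorphism; in particular it is multiplicative and sends real-valued functions to self-adjoint operators. This is precisely what the spectral theorem for a bounded self-adjoint operator provides, or, if one prefers an elementary route, it follows by approximating $\varphi$ uniformly on the compact set $\sigma(X)$ by polynomials (Stone--Weierstrass) and passing to the limit in operator norm.

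Next, since $h \geqslant 0$ on the compact set $\sigma(X)$, the function $k = \sqrt{h}$ is well defined, continuous and real-valued on $\sigma(X)$, with $k^2 = h$ there. Applying $\Phi$ and using multiplicativity gives $h(X) = \Phi(h) = \Phi(k^2) = \Phi(k)^2 = k(X)^2$. Because $k$ is real-valued, $k(X)$ is self-adjoint, hence $k(X)^2 = k(X)^* k(X) \geqslant 0$, so $h(X) \geqslant 0$ and therefore $f(X) \geqslant g(X)$.

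The main (and essentially only) obstacle is the clean invocation of the functional-calculus properties for a bounded self-adjoint operator on a general Hilbert space; once those are in hand the argument is the one-line square-root trick above. In the finite-dimensional matrix setting relevant to the applications in Sections 2--3 one can bypass the abstract machinery entirely: diagonalize $X = U \operatorname{diag}(\lambda_1,\dots,\lambda_n) U^*$ with $\lambda_j \in \sigma(X)$, whereupon $f(X) - g(X) = U \operatorname{diag}\big(h(\lambda_1),\dots,h(\lambda_n)\big) U^*$ is visibly positive semidefinite since each $h(\lambda_j) \geqslant 0$.
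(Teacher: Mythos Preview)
Your argument is correct: reducing to $h=f-g\geqslant 0$ on $\sigma(X)$ and writing $h(X)=k(X)^*k(X)$ with $k=\sqrt{h}$ via the continuous functional calculus is exactly the standard proof, and the finite-dimensional diagonalization remark is also fine.

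There is nothing to compare against, however: the paper does not prove this lemma at all. It is quoted verbatim from the reference \cite{f3} (Furuta--Mi\'ci\'c Hot--Pe\v{c}ari\'c) and used as a black box in the proof of Theorem~\ref{t1}. So your proposal supplies a proof where the paper simply cites one.
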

Next, we give the  first result in this section, which is based on
Theorem \ref{l01} and is a refinement of Theorem 1 in \cite{jz}.
\begin{theorem}\label{t1}
Let $ A, B \in  \mathbb{B}(\mathcal{H}) $ be two positive
invertible operators and $ \nu \in (0, 1). $
\begin{equation}\label{1e}
 A \nabla_{\nu} B \geqslant   A \sharp_{\nu} B+
\sum_{k=0}^{\infty}r_{k} [ A\sharp_{\frac{m_k}{2^k}}B -2
A\sharp_{\frac{2m_k+1 }{2^{k+1}}}B +
A\sharp_{{\frac{m_k+1}{2^k}}}B].
\end{equation}

\end{theorem}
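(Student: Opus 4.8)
The plan is to transfer the numerical inequality \eqref{y1} to operators by the usual congruence trick together with the order-preservation of continuous functional calculus (Lemma \ref{l1}), with one extra limiting step to handle the infinite series. Since $A$ is positive and invertible, the map $X\mapsto A^{-1/2}XA^{-1/2}$ is an order isomorphism on the self-adjoint part of $\mathbb{B}(\mathcal{H})$, and $A^{1/2}(A^{-1/2}XA^{-1/2})A^{1/2}=X$; hence \eqref{1e} is equivalent to the same inequality conjugated by $A^{-1/2}$. Writing $T:=A^{-1/2}BA^{-1/2}$, which is positive and invertible with $\sigma(T)\subset(0,\infty)$, and recalling that $A^{1/2}T^{s}A^{1/2}=A\sharp_{s}B$ for every real $s$ while $A^{1/2}\big((1-\nu)I+\nu T\big)A^{1/2}=A\nabla_{\nu}B$, it suffices to prove
\[
(1-\nu)I+\nu T\;\geq\;T^{\nu}+\sum_{k=0}^{\infty}r_{k}\Big[T^{\frac{m_k}{2^k}}-2\,T^{\frac{2m_k+1}{2^{k+1}}}+T^{\frac{m_k+1}{2^k}}\Big].
\]
The bracket is exactly the conjugate of the squared bracket in \eqref{y1}: with $a=1$, $b=t$ one has $\big(a^{1-s}b^{s}\big)^{1/2}=t^{s/2}$, so $\big[(a^{1-\frac{m_k}{2^k}}b^{\frac{m_k}{2^k}})^{1/2}-(a^{1-\frac{m_k+1}{2^k}}b^{\frac{m_k+1}{2^k}})^{1/2}\big]^{2}=t^{\frac{m_k}{2^k}}-2t^{\frac{2m_k+1}{2^{k+1}}}+t^{\frac{m_k+1}{2^k}}$.

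For the scalar input, substitute $a=1$ and $b=t$ into the finite form \eqref{y02}: for every $N\in\mathbb{N}\cup\{0\}$ and every $t>0$,
\[
(1-\nu)+\nu t\;\geq\;t^{\nu}+\sum_{k=0}^{N}r_{k}\Big[t^{\frac{m_k}{2^k}}-2\,t^{\frac{2m_k+1}{2^{k+1}}}+t^{\frac{m_k+1}{2^k}}\Big].
\]
Both sides are continuous in $t$ on $(0,\infty)\supset\sigma(T)$, so Lemma \ref{l1} gives the corresponding operator inequality $(1-\nu)I+\nu T\geq T^{\nu}+S_N$, where $S_N:=\sum_{k=0}^{N}r_k\big(T^{m_k/2^{k+1}}-T^{(m_k+1)/2^{k+1}}\big)^{2}$ (the rewriting of the bracket as a square is legitimate since all powers of $T$ commute). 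The sequence $(S_N)$ is increasing, consists of positive operators, and is bounded above by $(1-\nu)I+\nu T-T^{\nu}$; hence it converges in the strong operator topology to some positive $S\leq(1-\nu)I+\nu T-T^{\nu}$ (in finite dimensions this is immediate). Conjugating $(1-\nu)I+\nu T\geq T^{\nu}+S_N$ by $A^{1/2}$ and then letting $N\to\infty$ — using that $X\mapsto A^{1/2}XA^{1/2}$ is strongly continuous on norm-bounded sets and that $A^{1/2}S_NA^{1/2}$ is precisely the $N$-th partial sum of the series in \eqref{1e} after the identities $A^{1/2}T^{s}A^{1/2}=A\sharp_{s}B$ — yields \eqref{1e}.

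The only genuine obstacle is this last limiting step: Lemma \ref{l1} is a statement about (finitely described) continuous functions on the spectrum, so one cannot feed it the infinite series directly; instead one applies it to each partial-sum inequality \eqref{y02} and then invokes monotone convergence for the bounded increasing sequence $(S_N)$ of positive operators, whose boundedness is guaranteed by the Remark following Theorem \ref{l01}. All remaining computations — the congruence identities and the expansion of the brackets — are routine.
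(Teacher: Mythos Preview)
Your proof is correct and follows the same route as the paper: specialize the scalar inequality to $a=1$, apply Lemma~\ref{l1} to $T=A^{-1/2}BA^{-1/2}$, and conjugate by $A^{1/2}$. The one point where you diverge is the handling of the infinite series: the paper simply feeds the full series into Lemma~\ref{l1} as if the infinite sum were a single continuous function, whereas you apply Lemma~\ref{l1} to each finite partial sum \eqref{y02} and then pass to the limit using the strong convergence of the bounded increasing sequence $(S_N)$. Your version is the more careful one --- the paper tacitly assumes continuity of the series sum without justification --- but the underlying argument is identical.
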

\begin{proof}
Choosing $ a=1, $ in Theorem \ref{l01},  we have
\begin{equation*}
 1-\nu+ \nu b \geqslant  b^{\nu}+
\sum_{k=0}^{\infty}r_{k}\big[\big(b^{\frac{m_k}{2^k}}
\big)^{\frac{1}{2}}-\big(b^{\frac{m_k+1}{2^k}}
\big)^{\frac{1}{2}}\big]^{2},
\end{equation*}
for any $ b > 0. $\\
If    $ X=A^{-\frac{1}{2}}BA^{-\frac{1}{2}}, $  then  $\sigma(X)
\subseteq (0, \infty). $ According to Lemma \ref{l1}, we get
\begin{equation*}
 (1-\nu)I+\nu X \geqslant  X^{\nu}+
\sum_{k=0}^{\infty}r_{k} [X^{\frac{m_k}{2^k}} -2X^{\frac{2m_k+1
}{2^{k+1}}} + X^{\frac{m_k+1}{2^k}}].
\end{equation*}
Multiplying both sides  by $ A^{\frac{1}{2}},$ we obtain
\begin{equation*}
 A \nabla_{\nu} B \geqslant   A \sharp_{\nu} B+
\sum_{k=0}^{\infty}r_{k} [ A\sharp_{\frac{m_k}{2^k}}B -2
A\sharp_{\frac{2m_k+1 }{2^{k+1}}}B +
A\sharp_{{\frac{m_k+1}{2^k}}}B].
\end{equation*}
This completes the proof.
\end{proof}

Since for all positive integer $ n, $
\small{
 $$ f(t) = \sum _{k=0}^{n} r_{k}\big[\big(t^{\frac{m_k}{2^k}}
\big)^{\frac{1}{2}}-\big(t^{\frac{m_k+1}{2^k}}
\big)^{\frac{1}{2}}\big]^{2}=\sum _{k=0}^{n} r_{k}  \big[
t^{\frac{m_k}{2^k}}
  -2t^{\frac{2m_k+1}{2^{k+1}}}
+   t^{\frac{m_k+1}{2^k}} \big]$$ }
 is a continuous function on $ [0, \infty) $ and $ A^{-\frac{1}{2}}B
 A^{-\frac{1}{2}} $ is a positive operator, then $ \sigma (f(A^{-\frac{1}{2}}
 BA^{-\frac{1}{2}}))  \subseteq [0, \infty). $  Thus
  $$ A^{\frac{1}{2}} f(A^{-\frac{1}{2}}BA^{-\frac{1}{2}}) A^{\frac{1}{2}}=
  \sum _{k=0}^{n} r_{k} (A\sharp_{\frac{m_k}{2^k}} B -2 A\sharp_{
  \frac{2m_k+1}{2^{k+1}}}B+  A\sharp_{\frac{m_k+1}{2^k}} B),$$
  is a  positive operator.  Then by inequality (\ref{1e}), we obtain
\begin{align*}
  A  \sharp_{\nu} B  \leqslant   A \sharp_{\nu} B +  \sum _{k=0}^{n} r_{k} (A\sharp_{
\frac{m_k}{2^k}} B -2 A\sharp_{\frac{2m_k+1}{2^{k+1}}}B+ A\sharp_{
\frac{m_k+1}{2^k}} B) \leqslant A \nabla_{\nu} B.
\end{align*}
and therefore
\begin{align}\label{a1}
  A  \sharp_{\nu} B  \leqslant   A \sharp_{\nu} B +  \sum _{k=0}^{\infty} r_{k} (A\sharp_{
\frac{m_k}{2^k}} B -2 A\sharp_{\frac{2m_k+1}{2^{k+1}}}B+ A\sharp_{
\frac{m_k+1}{2^k}} B)  \leqslant A \nabla_{\nu} B.
\end{align}
Replacing $ A $ and $ B $ by $ A^{-1} $ and $ B^{-1} $
respectively, we obtain \small{
\begin{align}\label{a2}
 &A^{-1}  \sharp_{\nu} B^{-1}  \nonumber\\
 &\leqslant   A^{-1} \sharp_{\nu} B^{-1}+  \sum _{k=0}^{\infty} r_{k} (A^{-1}\sharp_{\frac{m_k}{2^k}}B^{-1} -2 A^{-1}\sharp_{\frac{2m_k+1}{2^{k+1}}}
B^{-1}+  A^{-1}\sharp_{\frac{m_k+1}{2^k}} B^{-1}) \nonumber \\
& \leqslant A^{-1} \nabla_{\nu} B^{-1}.
\end{align}}

 Taking inverse in (\ref{a2}), we have \small{
\begin{align}\label{a3}
  & A  !_{\nu}B   \nonumber \\
& \leqslant   \{ A^{-1} \sharp_{\nu} B ^{-1}+  \sum _{k=0}^{n}
r_{k} (A^{-1}\sharp_{\frac{m_k}{2^k}} B^{-1} -2 A^{-1}\sharp_{
\frac{2m_k+1}{2^{k+1}}}B^{-1}+  A^{-1}\sharp_{\frac{m_k+1}{2^k}} B^{-1})\}^{-1}  \nonumber \\
& \leqslant A \sharp_{\nu} B.
\end{align}}
It is worth to mention that inequalities  (\ref{a1}),   (\ref{a2})   and  (\ref{a3})
are respectively refinements of inequalities (30)-(34) in \cite{jz}.\\
The following theorem is an operator version of Theorem \ref{l2}
and is a refinement of Theorem 2 in \cite{jz}.
\begin{theorem}
Let $ A, B \in  \mathbb{B}(\mathcal{H}) $ be two positive
invertible operators and $ \nu \in (0, 1). $
\begin{equation*}
 A \nabla_{\nu} B \leqslant   A \sharp_{\nu} B +(A-2A \sharp B + B  ) -
\sum_{k=0}^{\infty}r_{k} [ A\sharp_{\frac{m_k}{2^k}}B -2
A\sharp_{\frac{2m_k+1 }{2^{k+1}}}B +
A\sharp_{{\frac{m_k+1}{2^k}}}B].
\end{equation*}
\end{theorem}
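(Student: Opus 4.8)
The plan is to transcribe the short scalar proof of Theorem~\ref{l2} into the operator setting, replacing each scalar ingredient by its operator counterpart via Lemma~\ref{l1} and the already proved operator inequality \eqref{1e}. Write $G:=A\sharp B$, so $G=B\sharp A$ by symmetry of the geometric mean. The scalar identity $(\sqrt a-\sqrt b)^{2}-a\nabla_\nu b=b\nabla_\nu a-2\sqrt{ab}$ has the exact operator analogue $(A-2G+B)-A\nabla_\nu B=\nu A+(1-\nu)B-2G=B\nabla_\nu A-2G$. To the right-hand side I would apply the operator refinement established as inequality \eqref{1e} in Theorem~\ref{t1}, used for the ordered pair $(B,A)$ and the same parameter $\nu$ (note that $r_k$ and $m_k$ depend on $\nu$ only, not on the operators): this gives $B\nabla_\nu A\geqslant B\sharp_\nu A+\Sigma$, where $\Sigma:=\sum_{k=0}^{\infty}r_k\big[B\sharp_{\frac{m_k}{2^k}}A-2B\sharp_{\frac{2m_k+1}{2^{k+1}}}A+B\sharp_{\frac{m_k+1}{2^k}}A\big]$; each summand here is a positive operator, being the $B^{1/2}$-conjugate of the nonnegative function $r_k(t^{m_k/2^{k+1}}-t^{(m_k+1)/2^{k+1}})^{2}$ of $B^{-1/2}AB^{-1/2}$, exactly as in the discussion preceding \eqref{a1}.

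It then remains to absorb the term $-2G$, and for this I would invoke the operator inequality $2G\leqslant A\sharp_\nu B+A\sharp_{1-\nu}B$, which is the operator form of $\sqrt{ab}\leqslant H_\nu(a,b)$ and follows from Lemma~\ref{l1} applied at $X=A^{-1/2}BA^{-1/2}$, just as in the proof of Theorem~\ref{t1}. Since $B\sharp_\nu A=A\sharp_{1-\nu}B$ (a standard identity for the weighted geometric mean), this gives $B\sharp_\nu A-2G\geqslant-A\sharp_\nu B$. Chaining the three steps yields $(A-2G+B)-A\nabla_\nu B\geqslant -A\sharp_\nu B+\Sigma$, i.e.\ $A\nabla_\nu B\leqslant A\sharp_\nu B+(A-2G+B)-\Sigma$. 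Finally, rewriting each summand of $\Sigma$ through the reflection identity $B\sharp_t A=A\sharp_{1-t}B$ and reordering the three entries of the bracket recasts $\Sigma$ as the series in the statement; here one uses the exponents $1-\frac{m_k}{2^k}$, $1-\frac{2m_k+1}{2^{k+1}}$, $1-\frac{m_k+1}{2^k}$ together with the relation $\lfloor 2^k(1-\nu)\rfloor=2^k-m_k-1$ recorded in the proof of Theorem~\ref{l01}.

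I do not anticipate a genuine obstacle: the argument is a routine operatorial transcription of a four-line scalar computation, so the real work is bookkeeping. Two small points need care. First, the index translation in the series just described: one must verify that the three $B\sharp_{\bullet}A$ terms match, in the stated order, the three $A\sharp_{\bullet}B$ terms claimed. Second, the series is infinite; as in the Remark after Theorem~\ref{l01}, the partial sums $\Sigma_n$ form an increasing sequence of positive operators bounded above by $A\sharp_\nu B+(A-2G+B)-A\nabla_\nu B$, hence converge, and one may equally well run the entire argument with the truncations $\sum_{k=0}^{n}$ (which already satisfy the required inequality by \eqref{y02}) and pass to the limit at the end. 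An alternative, closer in form to the proof of Theorem~\ref{t1}, is to set $a=1$ in \eqref{y2}, apply Lemma~\ref{l1} at $X=A^{-1/2}BA^{-1/2}$, and conjugate by $A^{1/2}$: then $1-\nu+\nu b\mapsto A\nabla_\nu B$, $b^\nu\mapsto A\sharp_\nu B$, $(1-\sqrt b)^{2}=1-2b^{1/2}+b\mapsto A-2(A\sharp B)+B$, and the scalar series maps, after the same $t\mapsto1-t$ reflection, onto the operator series.
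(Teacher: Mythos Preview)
Your alternative route at the end is exactly the paper's proof: set $a=1$ in \eqref{y2}, apply Lemma~\ref{l1} to $X=A^{-1/2}BA^{-1/2}$, and conjugate by $A^{1/2}$. Your main route is the same argument carried out one level up---instead of operatorizing the finished scalar inequality \eqref{y2}, you operatorize its ingredients (inequality \eqref{1e} for the pair $(B,A)$ and the Heinz bound $2\,A\sharp B\leqslant A\sharp_\nu B+A\sharp_{1-\nu}B$) and reassemble them. This is a legitimate but slightly longer path to the same destination, and the paper's one-line proof is simply your alternative.

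One caution on the final reindexing. Both routes produce the series
\[
\sum_{k\geqslant0} r_k\Big[A\sharp_{1-\frac{m_k}{2^k}}B-2\,A\sharp_{1-\frac{2m_k+1}{2^{k+1}}}B+A\sharp_{1-\frac{m_k+1}{2^k}}B\Big],
\]
in line with the exponents appearing in \eqref{y2}. Your appeal to $\lfloor2^k(1-\nu)\rfloor=2^k-m_k-1$ rewrites this as the analogous series indexed by $m_k(1-\nu)$, \emph{not} by $m_k(\nu)$; so it does not literally match the indices printed in the theorem, which seem to be carried over from \eqref{y1} rather than \eqref{y2}. Nothing is wrong with your argument---just note that what it (and the paper's own proof) actually yields is the version with exponents $1-\tfrac{m_k}{2^k}$, $1-\tfrac{2m_k+1}{2^{k+1}}$, $1-\tfrac{m_k+1}{2^k}$.
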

\begin{proof}
By Theorem \ref{l2}, using the same ideas as in the proof of
Theorem \ref{t1}, we can get the result.
\end{proof}
\begin{corollary}
Let $ A, B \in  \mathbb{B}(\mathcal{H}) $ be two positive
invertible operators and $ \nu \in (0, 1). $ Then
\begin{equation*}
 A \nabla B \geqslant  H_{\nu}(A, B) +
\sum_{k=0}^{\infty}r_{k} [ H_{\frac{m_k}{2^k}}(A,B) -2
H_{\frac{2m_k+1 }{2^{k+1}}}(A,B) + H_{{\frac{m_k+1}{2^k}}}(A,B)].
\end{equation*}
and
\begin{align*}
 A \nabla B \leqslant  H_{\nu}(A, B) &+(A-2A \sharp B + B  ) \\
 &-
\sum_{k=0}^{\infty}r_{k} [ H_{\frac{m_k}{2^k}}(A,B) -2
H_{\frac{2m_k+1 }{2^{k+1}}}(A,B) + H_{{\frac{m_k+1}{2^k}}}(A,B)].
\end{align*}
\end{corollary}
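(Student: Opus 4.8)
The plan is to obtain both inequalities exactly as Theorem~\ref{t1} was obtained, but starting from the two numerical Heinz-mean corollaries already established in Section~2 (the refinement of the Young inequality derived from~\eqref{y1}, and its reverse derived from~\eqref{y2}) rather than from Theorems~\ref{l01} and~\ref{l2} themselves. Concretely, I specialize the scalar Heinz inequality to $a=1$, promote it to an operator inequality via the monotonicity of the continuous functional calculus (Lemma~\ref{l1}) applied to $X:=A^{-\frac{1}{2}}BA^{-\frac{1}{2}}$, whose spectrum lies in $(0,\infty)$, and then conjugate by $A^{\frac{1}{2}}$.

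For the first inequality I would set $a=1$ in
\[
a\nabla b \geqslant H_{\nu}(a,b)+\sum_{k=0}^{\infty}r_{k}\big[H_{\frac{m_k}{2^k}}(a,b)-2H_{\frac{2m_k+1}{2^{k+1}}}(a,b)+H_{\frac{m_k+1}{2^k}}(a,b)\big],
\]
and, noting that $H_{s}(1,b)=\frac{b^{s}+b^{1-s}}{2}$, read the result as the scalar inequality $\frac{1+b}{2}\geqslant g(b)$, valid for all $b>0$, where $g(b)$ is the right-hand side regarded as a function of $b$. By Lemma~\ref{l1} this gives $\frac{I+X}{2}\geqslant g(X)$, and conjugating by $A^{\frac{1}{2}}$ and using $A^{\frac{1}{2}}\frac{I+X}{2}A^{\frac{1}{2}}=A\nabla B$ together with $A^{\frac{1}{2}}\frac{X^{s}+X^{1-s}}{2}A^{\frac{1}{2}}=H_{s}(A,B)$ for every weight $s$ appearing in $g$ (in particular $A^{\frac{1}{2}}\frac{X^{\nu}+X^{1-\nu}}{2}A^{\frac{1}{2}}=H_{\nu}(A,B)$), one reaches the stated inequality.

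The reverse inequality follows in exactly the same way from the Heinz-mean corollary after Theorem~\ref{l2}: with $a=1$ the extra summand there is $(1-\sqrt{b})^{2}$, and conjugating $I-2X^{\frac{1}{2}}+X$ by $A^{\frac{1}{2}}$ produces $A-2A\sharp B+B$, which is precisely the term $(A-2A\sharp B+B)$ in the statement; the minus sign in front of the series is carried along unchanged.

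The only points that need care are the two already dealt with in this section. First, Lemma~\ref{l1} is stated for genuinely continuous functions, so strictly one applies it to the partial sums $\sum_{k=0}^{n}r_{k}[\,\cdots\,]$ — each continuous on $[0,\infty)$, as recorded after Theorem~\ref{t1} — obtaining the operator inequality for every $n$ and then letting $n\to\infty$; since each bracketed operator $A\sharp_{\frac{m_k}{2^k}}B-2A\sharp_{\frac{2m_k+1}{2^{k+1}}}B+A\sharp_{\frac{m_k+1}{2^k}}B$ equals $A^{\frac{1}{2}}$ times a nonnegative function of $X$ times $A^{\frac{1}{2}}$, hence is positive, the partial sums form an increasing, bounded-above sequence and therefore converge — exactly the convergence noted in the Remark after Theorem~\ref{l01}. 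Second, one must carry the shifted weights $\frac{m_k}{2^k}$, $\frac{2m_k+1}{2^{k+1}}$, $\frac{m_k+1}{2^k}$ faithfully through the congruence $Y\mapsto A^{\frac{1}{2}}YA^{\frac{1}{2}}$, which is pure bookkeeping. I therefore do not anticipate any real obstacle: the corollary is a direct functional-calculus transcription of the numerical Heinz inequalities, following the template of Theorem~\ref{t1}.
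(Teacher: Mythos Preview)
Your argument is correct. The paper states this corollary without proof, so there is no explicit argument to compare against; your functional-calculus transcription of the scalar Heinz corollaries, following the template of Theorem~\ref{t1}, is exactly the intended routine. An equally short route (and perhaps the one the placement of the corollary suggests) is to stay at the operator level: apply Theorem~\ref{t1} and its reverse as written, then again with $A$ and $B$ interchanged (using $B\sharp_{s}A=A\sharp_{1-s}B$), and average the two; this avoids re-running the functional calculus and the partial-sum limit argument, but the content is identical to what you wrote.
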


\section{ Hilbert-Schmidt norm version}
In this  section, we obtain some inequalities for the Hilbert-Schmidt norm.
Applying Theorem \ref{l3}, we get the
following theorem that is a refinement of first inequalities in
(\ref{b1}) and (\ref{b2}).
\begin{theorem}\label{t2}
Let $ A, B, X \in  \mathbb{M}_{n}$  such that $ A $ and  $ B$  are two  positive
semidefinite matrices  and $ \nu \in (0, 1). $ Then
\begin{align*}
\| A^{1-\nu}XB^{\nu}\|_{2}^{2}+r_{0}^{2}\|AX-XB\|_{2}^{2}&+\sum _{k=1}^{\infty}r_{k}
\|A^{1-\frac{m_{k}}{2^{k}}}XB^{\frac{m_{k}}{2^{k}}}- A^{1-\frac{m_{k}+1}{2^{k}}}X
B^{\frac{m_{k}+1}{2^{k}}} \|_{2}^{2}\\
&\leqslant  \|(1-\nu)AX-\nu XB \|_{2}^{2}.
\end{align*}
\end{theorem}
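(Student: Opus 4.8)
The plan is to deduce this Hilbert--Schmidt inequality from the scalar inequality \eqref{y5} of Theorem~\ref{l3} by simultaneously diagonalizing $A$ and $B$ and invoking the unitary invariance of $\|\cdot\|_2$. Before doing so I would flag a sign in the statement: the right-hand side is printed as $\|(1-\nu)AX-\nu XB\|_2^2$, but taking $A=B=X=I$ and $\nu=\tfrac12$ makes the left-hand side equal to $n=\|I\|_2^2$ while $\|(1-\nu)AX-\nu XB\|_2^2=0$, so the minus sign cannot be correct. Since $\|(1-\nu)AX-\nu XB\|_2^2\le\|(1-\nu)AX+\nu XB\|_2^2$ for positive semidefinite $A,B$ (the two differ by the nonnegative cross term $4(1-\nu)\nu\,\mathrm{tr}(BX^{*}AX)\ge0$), and since the theorem is billed as a refinement of the first inequalities in \eqref{b1} and \eqref{b2}, the intended right-hand side is the arithmetic-mean expression $\|(1-\nu)AX+\nu XB\|_2^2$; this is the inequality I would prove.

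Writing $A=U\Lambda U^{*}$ and $B=VMV^{*}$ with $U,V$ unitary, $\Lambda=\mathrm{diag}(\lambda_1,\dots,\lambda_n)$ and $M=\mathrm{diag}(\mu_1,\dots,\mu_n)$ of nonnegative entries, I would set $Y=U^{*}XV=(y_{ij})$. Because $\|\cdot\|_2$ is unitarily invariant, every norm in the statement collapses to a weighted sum of the $|y_{ij}|^2$: one has $\|A^{1-\nu}XB^{\nu}\|_2^2=\sum_{i,j}\lambda_i^{2(1-\nu)}\mu_j^{2\nu}|y_{ij}|^2$, $\|(1-\nu)AX+\nu XB\|_2^2=\sum_{i,j}((1-\nu)\lambda_i+\nu\mu_j)^2|y_{ij}|^2$, $\|AX-XB\|_2^2=\sum_{i,j}(\lambda_i-\mu_j)^2|y_{ij}|^2$, and for each $k$ the term $\|A^{1-m_k/2^k}XB^{m_k/2^k}-A^{1-(m_k+1)/2^k}XB^{(m_k+1)/2^k}\|_2^2$ becomes $\sum_{i,j}\big(\lambda_i^{1-m_k/2^k}\mu_j^{m_k/2^k}-\lambda_i^{1-(m_k+1)/2^k}\mu_j^{(m_k+1)/2^k}\big)^2|y_{ij}|^2$.

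After this reduction the desired inequality is, coefficient by coefficient in $|y_{ij}|^2$, precisely inequality \eqref{y5} of Theorem~\ref{l3} applied to the positive numbers $a=\lambda_i$, $b=\mu_j$, where one uses $a\sharp_\nu b=a^{1-\nu}b^{\nu}$ so that $(\lambda_i\sharp_\nu\mu_j)^2=\lambda_i^{2(1-\nu)}\mu_j^{2\nu}$ and $(\lambda_i\nabla_\nu\mu_j)^2=((1-\nu)\lambda_i+\nu\mu_j)^2$. Hence I would apply \eqref{y5} for every pair $(i,j)$, multiply the resulting scalar inequality by the nonnegative weight $|y_{ij}|^2$, and sum over $i,j$; interchanging the finite sum over $(i,j)$ with the infinite sum over $k$ is legitimate because, by the remark following Theorem~\ref{l01}, each $k$-series consists of nonnegative terms and converges.

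The one step I expect to require care is the passage from positive definite to merely positive semidefinite $A,B$, where a diagonal entry $\lambda_i$ or $\mu_j$ may vanish and the fractional powers $A^{1-m_k/2^k}$ etc.\ must be interpreted. Since every exponent lies in $[0,1]$ and each scalar summand is continuous on $[0,\infty)^2$, I would first establish the inequality for positive definite $A,B$ and then let the relevant eigenvalues tend to $0$, the nonnegative structure of the series making the limit harmless. This is the only genuine obstacle; the remainder is the mechanical unfolding of unitary invariance described above.
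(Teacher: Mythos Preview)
Your approach is exactly the paper's: diagonalize $A=U\Lambda U^{*}$, $B=VMV^{*}$, set $Y=U^{*}XV$, rewrite every norm via unitary invariance as a weighted sum over $|y_{ij}|^{2}$, and apply the scalar inequality \eqref{y5} entrywise. Your diagnosis of the sign typo (the intended right-hand side is $\|(1-\nu)AX+\nu XB\|_{2}^{2}$) and your continuity remark for the semidefinite case are both correct and in fact go slightly beyond what the paper writes out.
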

\begin{proof}
Since $ A $ and  $ B $ are positive  semidefinite $ n\times n $ matrices,  there  exist  unitary matrices $ U, V  \in  \mathbb{M}_{n} $   such that
 $ A=U diag (\lambda_{1}, \ldots, \lambda_{n})U^{*} $ and    $ B=V diag (\mu_{1},
 \ldots, \mu_{n})V^{*}. $
Let $ Y=U^{*}XV=(y_{ij}). $ Then it's straightforward to check that
$$ (1-\nu)AX-\nu XB =U [((1-\nu)\lambda_{   i }+\nu \mu_{j})\circ Y] V^{*}, $$
 $$ AX-XB=U[(\lambda_{i}-\mu_{j})\circ Y] V^{*} $$
$$ A^{1-\nu} X B^{\nu}=U [(\lambda_{i}^{1- \nu}\mu_{j}^{\nu})\circ Y]V^{*}$$   and
$$A^{1-\frac{m_{k}}{2^{k}}}XB^{\frac{m_{k}}{2^{k}}}- A^{1-\frac{m_{k}+1}{2^{k}}}X
B^{\frac{m_{k}+1}{2^{k}}}=U[(\lambda_{i}^{1-\frac{m_{k}}{2^{k}}}
\mu_{j}^{\frac{m_{k}}{2^{k}}}- \lambda_{i}^{1-\frac{m_{k}}{2^{k}}}
\mu_{j}^{\frac{m_{k}}{2^{k}}}) \circ Y]V^{*}.$$ Utilizing the
unitarily invariant  property of $ \|.\|_{2} $ and Theorem
\ref{l3}, we have
\begin{align*}
 &\|(1-\nu)AX-\nu XB \|_{2}^{2}\\
 &=\|((1-\nu)\lambda_{  i }+\nu \mu_{j})\circ Y \|_{2}^{2}\\
 &=\sum_{i,j=1}^{n} ((1-\nu)\lambda_{ i }+\nu \mu_{j})^2|y_{ij}|^{2}\\
 & \geqslant\sum_{i,j=1}^{n} \left \{(\lambda_{i}^{1- \nu}\mu_{j}^{\nu})^2 + r_{0}^{2}
 (\lambda_{i}-\mu_{j})^{2}+ \sum _{k=1}^{\infty}r_{k}(\lambda_{i}^{1-\frac{m_{k}}{2^{k}}}
 \mu_{j}^{\frac{m_{k}}{2^{k}}}-
 \lambda_{i}^{1-\frac{m_{k}}{2^{k}}} \mu_{j}^{\frac{m_{k}}{2^{k}}})^{2}    \right\}|y_{ij}|^{2}\\
 &=\sum_{i,j=1}^{n}(\lambda_{i}^{1- \nu}\mu_{j}^{\nu})^2|y_{ij}|^{2}+ \sum_{i,j=1}^{n}  r_{0}^{2}
 (\lambda_{i}-\mu_{j})^{2}|y_{ij}|^{2}\\
 & \quad +   \sum_{i,j=1}^{n} \left   \{   \sum _{k=1}^{\infty}r_{k}(\lambda_{i}^{1-
 \frac{m_{k}}{2^{k}}} \mu_{j}^{\frac{m_{k}}{2^{k}}}-
  \lambda_{i}^{1-\frac{m_{k}}{2^{k}}} \mu_{j}^{\frac{m_{k}}{2^{k}}})^{2}  |y_{ij}|^{2}    \right\}\\
  &=\sum_{i,j=1}^{n}(\lambda_{i}^{1- \nu}\mu_{j}^{\nu})^2|y_{ij}|^{2}+ \sum_{i,j=1}^{n}
  r_{0}^{2}(\lambda_{i}-\mu_{j})^{2}|y_{ij}|^{2}\\
  & \quad +   \sum_{k=1}^{\infty} \left   \{   \sum _{i, j=1}^{n}r_{k}
  (\lambda_{i}^{1-\frac{m_{k}}{2^{k}}} \mu_{j}^{\frac{m_{k}}{2^{k}}}-
   \lambda_{i}^{1-\frac{m_{k}}{2^{k}}} \mu_{j}^{\frac{m_{k}}{2^{k}}})^{2}  |y_{ij}|^{2}    \right\}\\
   &= \| A^{1-\nu}XB^{\nu}\|_{2}^{2}+r_{0}^{2}\|AX-XB\|_{2}^{2} \\
   & \quad +\sum _{k=1}^{\infty}r_{k} \|A^{1-\frac{m_{k}}{2^{k}}}XB^{\frac{m_{k}}{2^{k}}}
   - A^{1-\frac{m_{k}+1}{2^{k}}}XB^{\frac{m_{k}+1}{2^{k}}} \|_{2}^{2}.
\end{align*}
So,  the proof is  complete.
\end{proof}

The last theorem is a refinement of second inequalities in
(\ref{b1})  and (\ref{b2}).
\begin{theorem}
Let $ A, B, X \in  \mathbb{M}_{n}$  such that $ A $ and  $ B$  are two  positive
semidefinite matrices  and $ \nu \in (0, 1). $ Then
\begin{align*}
 \|(1-\nu)AX-\nu XB \|_{2}^{2} & \leqslant  \| A^{1-\nu}XB^{\nu}\|_{2}^{2}+(1- r_{0})^{2}
 \|AX-XB\|_{2}^{2}\\
& \quad -\sum _{k=1}^{\infty}r_{k} \|A^{1-\frac{m_{k}}{2^{k}}}XB^{\frac{m_{k}}{2^{k}}}-
 A^{1-\frac{m_{k}+1}{2^{k}}}XB^{\frac{m_{k}+1}{2^{k}}} \|_{2}^{2}.
\end{align*}
\end{theorem}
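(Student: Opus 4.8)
The plan is to follow the proof of Theorem~\ref{t2} almost verbatim, replacing the appeal to Theorem~\ref{l3} by an appeal to Theorem~\ref{l4}. Since $A$ and $B$ are positive semidefinite, write $A=U\,\mathrm{diag}(\lambda_1,\dots,\lambda_n)\,U^{*}$ and $B=V\,\mathrm{diag}(\mu_1,\dots,\mu_n)\,V^{*}$ with $U,V$ unitary and $\lambda_i,\mu_j\ge 0$, and put $Y=U^{*}XV=(y_{ij})$. As observed in the proof of Theorem~\ref{t2}, each of the matrices $(1-\nu)AX+\nu XB$, $AX-XB$, $A^{1-\nu}XB^{\nu}$ and $A^{1-\frac{m_k}{2^k}}XB^{\frac{m_k}{2^k}}-A^{1-\frac{m_k+1}{2^k}}XB^{\frac{m_k+1}{2^k}}$ is of the form $U[(\phi(\lambda_i,\mu_j))\circ Y]V^{*}$ for the corresponding scalar function $\phi$, so by the unitary invariance of $\|\cdot\|_2$ its squared norm equals $\sum_{i,j=1}^{n}\phi(\lambda_i,\mu_j)^{2}\,|y_{ij}|^{2}$.

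Next I would invoke the numerical reverse inequality~\eqref{y6} of Theorem~\ref{l4}, applied to $a=\lambda_i$ and $b=\mu_j$ for every pair $(i,j)$. Although \eqref{y6} is stated for strictly positive reals, all the exponents occurring in it lie in $[0,1]$ (since $0\le m_k\le 2^k$), so both of its sides are continuous on $[0,\infty)^{2}$ and the inequality persists for $a,b\ge 0$; this is exactly what permits $A$ and $B$ to be only semidefinite. Multiplying the inequality obtained for the index $(i,j)$ by $|y_{ij}|^{2}\ge 0$ and summing over $i,j$ yields
\begin{align*}
\sum_{i,j}\big((1-\nu)\lambda_i+\nu\mu_j\big)^{2}|y_{ij}|^{2}
&\le \sum_{i,j}\big(\lambda_i^{1-\nu}\mu_j^{\nu}\big)^{2}|y_{ij}|^{2}+(1-r_0)^{2}\sum_{i,j}(\lambda_i-\mu_j)^{2}|y_{ij}|^{2}\\
&\quad-\sum_{i,j}|y_{ij}|^{2}\sum_{k=1}^{\infty}r_k\Big(\lambda_i^{1-\frac{m_k}{2^k}}\mu_j^{\frac{m_k}{2^k}}-\lambda_i^{1-\frac{m_k+1}{2^k}}\mu_j^{\frac{m_k+1}{2^k}}\Big)^{2}.
\end{align*}

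Finally, since the series $\sum_{k\ge1}r_k[\,\cdot\,]^{2}$ consists of nonnegative terms and converges (the Remark following Theorem~\ref{l01}), the finite sum over $i,j$ may be interchanged with the sum over $k$ in the last term, turning it into $\sum_{k=1}^{\infty}r_k\,\|A^{1-\frac{m_k}{2^k}}XB^{\frac{m_k}{2^k}}-A^{1-\frac{m_k+1}{2^k}}XB^{\frac{m_k+1}{2^k}}\|_2^{2}$; translating the remaining three sums back into squared Hilbert--Schmidt norms via unitary invariance gives exactly the stated inequality. The argument is essentially bookkeeping once Theorem~\ref{l4} is in hand, so I do not expect a genuine obstacle; the only points deserving a word of care are the continuous extension of \eqref{y6} to nonnegative arguments and the interchange of the two summations, both of which are routine.
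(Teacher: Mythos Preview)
Your proposal is correct and follows exactly the route the paper takes: the paper's own proof simply says to use Theorem~\ref{l4} together with the same diagonalization/Schur-product argument as in the proof of Theorem~\ref{t2}. Your added remarks on extending \eqref{y6} by continuity to $a,b\ge 0$ and on interchanging the finite sum over $i,j$ with the series over $k$ are the only details the paper leaves implicit, and they are handled correctly.
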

\begin{proof}
By Theorem \ref{l4} and  using  the same idea as in the proof of
Theorem \ref{t2},  we can obtain the desired  result.
\end{proof}
\bibliographystyle{amsplain}

\end{document}